\definecolor{cobalt}{RGB}{61,89,171}
\DeclareMathOperator{\Id}{Id}
\newcommand{\fX}{{\mathfrak X}}
\newcommand{\CC}{{\mathbb C}}
\newcommand{\RR}{{\mathbb R}}
\newcommand{\frd}{{\mathfrak{d}}}
\newcommand{\frg}{{\mathfrak{g}}}
\newcommand{\frh}{{\mathfrak{h}}}
\newcommand{\fq}{{\mathfrak{q}}}
\newtheorem{theorem}{Theorem}[section]
\newtheorem{proposition}[theorem]{Proposition}
\newtheorem{lemma}[theorem]{Lemma}
\newtheorem{corollary}[theorem]{Corollary}
\theoremstyle{definition}
\newtheorem{definition}[theorem]{Definition}
\newtheorem{example}[theorem]{Example}
\theoremstyle{remark}
\newtheorem{remark}[theorem]{Remark}
\title{Symmetric and skew-symmetric complex structures}
\author[G. Bazzoni]{Giovanni Bazzoni}
\address{Dipartimento di Scienza ed Alta Tecnologia, Università degli Studi dell'Insubria, Via Valleggio 11, 22100, Como, Italy}
\email{giovanni.bazzoni@uninsubria.it}
\author[A. Gil-García]{Alejandro Gil-García}
\address{Universidad Complutense de Madrid, Madrid, Spain}
\email{alegil05@ucm.es}
\author[A. Latorre]{Adela Latorre}
\address{Departamento de Matem\'atica Aplicada, Universidad Polit\'ecnica de Madrid, C/ Jos\'e Antonio Novais 10, 28040 Madrid, Spain}
\email{adela.latorre@upm.es}
\keywords{Complex symplectic structures, pseudo-Kähler structures, hypersymplectic structures, nilmanifolds}
\subjclass[2010]{Primary: 57N16. Secondary1: 22E25, 53C25, 53D05}
\begin{document}

\begin{abstract}
On a complex manifold $(M,J)$, we interpret complex symplectic and pseudo-Kähler structures as symplectic forms with respect to which $J$ is, respectively, symmetric and skew-symmetric. We classify complex symplectic structures on 4-dimensional Lie algebras. We develop a method for constructing hypersymplectic structures from the above data. This allows us to obtain an example of a hypersymplectic structure on a 4-step nilmanifold.
\end{abstract}

\maketitle


\section{Introduction}\label{sec:1}

It is customary to say that Kähler geometry lies in the intersection of complex, symplectic, and Riemannian geometry. In fact, a Kähler structure on a manifold $M$ can be thought of as a pair $(J,\omega)$, where $J$ is a complex structure and $\omega$ is a symplectic form, such that, for vector fields $X,Y\in\fX(M)$, $g(X,Y)=\omega(X,JY)$ defines a Riemannian metric on $M$. This requires both the tameness and the compatibility of $J$ with $\omega$, that is $\omega(X,JX)>0$ and $\omega(JX,JY)=\omega(X,Y)$. The latter condition ensures that $g$ is symmetric, the former that it is positive-definite. In fact, for 4-manifolds, Donaldson asks whether tameness implies compatibility (see \cite[Question 2]{Donaldson}). Dropping the tameness assumption, one enters the realm of pseudo-Kähler structures. The compatibility condition can be rewritten as $\omega(JX,Y)=-\omega(X,JY)$: with respect to $\omega$, $J$ is a {\em skew-symmetric} endomorphism. This is equivalent to $\omega\in\Omega^{1,1}(M)$.

In the same paper, Donaldson considers various geometric structures on a 4-manifold, which are described in terms of certain two-forms and a complex structure: symplectic, complex symplectic, Kähler, hyperKähler\ldots Clearly, all these structures can be defined in dimensions other than 4. For instance, a complex symplectic structure on a complex manifold is a holomorphic two-form which is closed and non degenerate; in particular, the complex manifold has even complex dimension and its canonical bundle is trivial. It is known that a complex symplectic structure can be equivalently given in terms of a complex structure $J$ and a real symplectic form $\sigma$, subject to a different compatibility condition, namely $\sigma(JX,Y)=\sigma(X,JY)$, for $X,Y\in\fX(M)$: with respect to $\sigma$, $J$ is a {\em symmetric} endomorphism. This is also equivalent to $\sigma\in\Omega^{2,0}(M)\oplus\Omega^{0,2}(M)$.

The aim of this paper is to study, on a fixed complex manifold $(M^{4n},J)$, the geometry of two symplectic forms $\sigma$ and $\omega$ with respect to which $J$ is, respectively, symmetric and skew-symmetric. Both hyperKähler \cite{Hitchin1} and hypersymplectic \cite{DancerSwann,Hitchin2} geometry can arise in this way. In the hyperKähler case, the pseudo-Kähler metric $g$ is actually Kähler, in fact $\mathrm{Hol}(g)\subset\mathrm{Sp}(n)$, while it is neutral in the hypersymplectic case, and $\mathrm{Hol}(g)\subset\mathrm{Sp}(2n,\mathbb{R})$. In both cases, $g$ is Ricci-flat. A related approach to these geometries has been taken in~\cite{Bande-Kotschick}. The importance of hyperKähler metrics in Physics, especially in supersymmetry, is well-known \cite{Hitchin3}. Hypersymplectic metrics as well are relevant in Physics, notably in $N=2$ string theory \cite{OoguriVafa}.

The kind of manifolds we are interested in are compact nilmanifolds and, more generally, solvmanifolds. These are quotients of a connected, simply connected, nilpotent (resp.~solvable) Lie group by a lattice, i.e.~a discrete and co-compact subgroup. On the one hand, it is well-known that a Kähler nilmanifold is diffeomorphic to a torus \cite{Hasegawa} and that a Kähler solvmanifold is a finite quotient of a complex torus and a complex torus bundle over a complex torus \cite{Hasegawa2}. On the other hand, over the last years, a great deal of research has been devoted to pseudo-Kähler and complex symplectic structures on nilpotent and solvable Lie algebras, see for instance \cite{BFLM2018,CPO,ContiRossi,CFU04,LatorreUgarte}. Hypersymplectic structures on nilpotent and solvable Lie algebras have been studied in \cite{Andrada2006,AndradaDotti,Guan11,NiBai}. Such structures at the Lie algebra level provide left-invariant structures on the corresponding connected, simply connected Lie groups, and on compact quotients thereof. For this reason, we will mainly focus on the study of geometric structures on Lie algebras.

This paper is organized as follows:
\begin{itemize}
\item in Section \ref{sec:Preliminaries} we recall the basic notions on geometric structures on Lie algebras;
\item in Section \ref{sec:CS4D} we consider 4-dimensional Lie algebras endowed with a complex structure and classify complex symplectic structures, that is, symplectic forms with respect to which the complex structure is symmetric (see Theorem \ref{theo:complex-symplectic_classification});
\item in Section \ref{section:hypersymplectic} we show how hypersymplectic structures, on a Lie algebra endowed with a fixed complex structure, can be constructed from the auxiliary data of two symplectic structures, such that the complex structure is symmetric with respect to one, and skew-symmetric with respect to the other one (see Theorem \ref{theo:main}); we also provide an example in which the symplectic forms can never be combined to obtain a hypersymplectic structure;
\item in Section \ref{sec:examples} we use Theorem \ref{theo:main} to construct two 1-parameter families of left-invariant hypersymplectic structures on a nilmanifold $\Gamma\backslash G$ with $\frg=\mathrm{Lie}(G)$ 4-step nilpotent; to the best of our knowledge, this is the first example with such nilpotency step. Both metrics are complete, one is flat and the other one is non-flat (see Theorem \ref{theo:example}). 
\end{itemize}

\noindent\textbf{Acknowledgements.} The first author is supported by GNSAGA of INdAM. The third author has been partially supported by the projects MTM2017-85649-P (AEI/FEDER, UE), and E22-17R ``\'Algebra y Geometr\'ia'' (Gobierno de Arag\'on/FEDER).


\section{Preliminaries}\label{sec:Preliminaries}

We briefly recall some basic notions about Lie algebras admitting different types of geometric structures. We will also fix the notation that will be used throughout the paper.


Let $\frg$ be an $n$-dimensional Lie algebra. It is well-known that
$\Lambda^*\frg^*=\bigoplus_{r\geq 0}\Lambda^r\frg^*$ is a differential graded algebra with $d\colon\Lambda^r\frg^*\to\Lambda^{r+1}\frg^*$; in fact, $d\colon\frg^*\to\Lambda^2\frg^*$ is the dual of the Lie bracket:
$$d\alpha(X,Y)=-\alpha\big([X,Y] \big), \ \ \forall\alpha\in\frg^*, \ \forall X,Y\in\frg.$$
Consequently, if we fix a basis $\{e^k\}_{k=1}^n$ of $\frg^*$, then $\frg$ is fully determined by the expressions:
\[
de^k=\sum_{1\leq i<j\leq n} c^k_{ij}\,e^i\wedge e^j, \quad 1\leq k\leq n\,,
\]
which are known as the \emph{structure equations}. In the rest of the paper, we will denote $e^{ij}:=e^i\wedge e^j$. Moreover, we will make use of Salamon's notation: for instance, if a $4$-dimensional Lie algebra $\frg$ is defined by structure equations $de^1=2\,e^{14}$, $de^2=-e^{24}$, $de^3=-e^{12}+e^{34}$, $de^4=0$, we will write $\frg=(2\cdot 14,-24,-12+34,0)$.


An endomorphism $A:\frg\longrightarrow\frg$ is called \emph{integrable} if the Nijenhuis tensor
\begin{equation}\label{nijenhuis}
N_A(X,Y):=-A^2[X,Y]+A[AX,Y]+A[X,AY]-[AX,AY]
\end{equation}
vanishes identically. 

\subsection{Complex structures on Lie algebras}\label{subsec:complexOnLie}

Let us suppose that the dimension of the Lie algebra $\frg$ is even, namely, $n=2m$.
An \emph{almost complex structure} on~$\frg$ is an endomorphism $J:\frg\to\frg$ satisfying $J^2=-\mathrm{Id}$. Naturally extending $J$ to the complexification $\frg_{\mathbb C}=\frg\otimes\mathbb C$, one observes that $J$ has eigenvalues~$i$ and~$-i$ with eigenspaces respectively given by
$$\frg^{(1,0)}=\{X-i\,JX\mid X\in\frg\}, \qquad 
\frg^{(0,1)}=\{X+i\,JX\mid X\in\frg\}.$$
Note that $J$ can be equivalently defined on $\frg^*$ using the expression $J\alpha=\alpha\circ J$, for any $\alpha\in\frg^*$, which provides a similar splitting on $\frg^*_\mathbb C=\frg^*\otimes\mathbb C$:
$$\frg^{*(1,0)}=\{\alpha-i\,J\alpha\mid \alpha\in\frg^*\}, \qquad 
\frg^{*(0,1)}=\{\alpha+i\,J\alpha\mid \alpha\in\frg^*\}.$$
This splitting extends to the space $\Lambda^*\frg^*_{\mathbb C}$, in such a way that for every $r>0$ one has
\[
\Lambda^r\frg^*_{\mathbb C}=\bigoplus_{p+q=r}\Lambda^p\frg^{*(1,0)}\otimes\Lambda^q\frg^{*(0,1)}\eqqcolon\bigoplus_{p+q=r}\Lambda^{p,q}\frg^*_\CC\,.
\]

If the almost complex structure $J$ additionally satisfies the integrability condition $N_J\equiv 0$, where $N_J$ is defined by~\eqref{nijenhuis}, then~$J$ is called a \emph{complex structure}. In this case,~$\frg^{(1,0)}$ and~$\frg^{(0,1)}$ are Lie subalgebras of~$\frg_{\mathbb C}$ and 
\[
d\left( \frg^{*(1,0)}\right)\subseteq \Lambda^{2,0}\frg^*_\CC\oplus\Lambda^{1,1}\frg^*_\CC\,;
\]
in fact, these are equivalent conditions for the integrability of $J$.

By a result of Samelson \cite{Samelson1953}, every even-dimensional compact Lie group admits at least one left-invariant complex structure. However, it is important to observe that not every Lie algebra admits a complex structure. Indeed, the existence of complex structures on (arbitrary) nilpotent or solvable Lie algebras is nowadays an open problem and an active field of research (see~\cite{ABD, COUV, GR, Guan11, Mil, Ovando2000, Salamon2001}, among others).

\subsection{Symplectic forms and (skew-)symmetric complex structures}\label{subsec:skew}
A \emph{symplectic structure} on a $2m$-dimensional Lie algebra $\frg$ is a closed $\omega\in\Lambda^2\frg^*$ such that $\omega^m\neq 0$. If~$\frg$ additionally admits a complex structure $J$, it is natural to wonder whether there exists some kind of relation between $J$ and $\omega$. In this sense, one can introduce the following terminology:

\begin{definition}
Let $\frg$ be a Lie algebra endowed with a complex structure $J$ and a symplectic form $\omega$. The complex structure $J$ is said to be
\begin{itemize}
\item \emph{symmetric} with respect to $\omega$ if $\omega(JX,Y)=\omega(X,JY)$, for every $X,Y\in\frg$;
\item \emph{skew-symmetric} with respect to $\omega$ if $\omega(JX,Y)=-\omega(X,JY)$, for every $X,Y\in\frg$.
\end{itemize}
\end{definition}

Each of the above conditions is related to a different type of geometric structure.

In the case of symmetric complex structures, it has been shown in~\cite[Lemma 3.2]{BFLM2018} that they are in one-to-one correspondence with complex symplectic structures. Recall that a \emph{complex symplectic structure} on a $2m$-dimensional real Lie algebra $\frg$ is a pair $(J,\omega_{\mathbb C})$ consisting of a complex structure~$J$ and an element $\omega_{\mathbb C}\in\Lambda^{2,0}\frg^{*}_{\mathbb C}$ which is closed and non-degenerate. The existence of the holomorphic form~$\omega_{\mathbb C}$ requires the complex dimension~$m$ of $(\frg,J)$ to be even.

A skew-symmetric complex structure naturally gives rise to a pseudo-Hermitian metric $g(X,Y)=\omega(JX,Y)$, for $X,Y\in\frg$, whose fundamental form is, precisely, $\omega\in\Lambda^{1,1}\frg^*_\CC$. Since~$\omega$ is closed, the metric~$g$ is known as \emph{pseudo-K\"ahler}. In the particular case when~$g$ is positive definite, the triple $(J,\omega,g)$ is called a K\"ahler structure.

There is in general no relation between the existence of pseudo-K\"ahler and complex symplectic structures, as shown by Yamada~\cite{Yamada2017}. However, in this paper we are particularly interested in their coexistence, especially when they share the same complex structure.


\subsection{Hypersymplectic structures on Lie algebras}\label{subsec:hyperSympl}
We shall see that a hypersymplectic structure on a Lie algebra is, in particular, an example of a complex structure which is symmetric with respect to some symplectic form and skew-symmetric with respect to some other symplectic form; in other words, hypersymplectic geometry is a special case of both complex symplectic and pseudo-K\"ahler geometry.

An {\em almost product structure} on a Lie algebra $\frg$ is an endomorphism $E\colon\frg\to\frg$ satisfying $E^2=\Id$, and not equal to $\pm\Id$. A {\em product structure} is an almost product structure which is integrable, that is, $N_E\equiv0$. Given an almost product structure $E$ on $\frg$, we can write $\frg=\frg_+\oplus\frg_-$, where $\frg_{\pm}$ is the eigenspace with eigenvalue $\pm1$ of $E$. It is easy to see that the integrability of $E$ is equivalent to $\frg_{\pm}$ being Lie subalgebras of $\frg$.

By combining a product structure with a complex structure one gets the notion of {\em complex product structure} on a Lie algebra $\frg$: this is a pair $(J,E)$ formed by a complex structure $J$ and a product structure $E$ which anti-commute: $J\circ E=-E\circ J$. In this case, $J\circ E$ is also a product structure on $\frg$ and $J\colon\frg_\pm\to\frg_\mp$ is an isomorphism; in particular, $\dim\frg_+=\dim\frg_-$. Notice that the dimension of a Lie algebra with a complex product structure is even but not necessarily a multiple of 4.
	
Let $\frg$ be a Lie algebra endowed with a complex product structure $(J,E)$ and let $g$ be a scalar product on $\frg$. We say that $g$ is {\em compatible} with $(J,E)$ if 
\begin{equation}\label{eq:compatible_metric}
g(JX,JY)=g(X,Y)\quad\text{and}\quad g(EX,EY)=-g(X,Y)\,, \quad  \forall X,Y\in\frg\,.
\end{equation}	
The compatibility condition implies that $\frg_\pm^\bot=\frg_\pm$, hence $g$ has signature $(m,m)$, where $2m=\dim\frg$. For $X,Y\in\frg$, define the following bilinear forms on $\frg$: 
\begin{equation}\label{eq:bilinear_forms}
\omega_1(X,Y)=g(JX,Y)\,, \quad \omega_2(X,Y)=g(EX,Y)\,, \quad \ \omega_3(X,Y)=g\big((J\circ E)X,Y\big)\,.
\end{equation}
Note that $\omega_2(X,Y)=\omega_3(JX,Y)$, hence one obtains $\omega_2$ from $\omega_3$ and $J$, and viceversa. Thanks to compatibility \eqref{eq:compatible_metric}, $\omega_i\in\Lambda^2\frg^*$ for $i=1,2,3$; moreover, they are non-degenerate, since $g$ is non-degenerate and $J$ and $E$ are isomorphisms. Let $\omega_\pm$ denote the restriction of $\omega_1$ to $\frg_\pm$. It follows from \cite[Lemma 3]{Andrada2006} that both $\omega_+$ and $\omega_-$ are non-degenerate. Hence $m=\dim\frg_+=\dim\frg_-$ must be even: $m=2n$. Therefore, $\dim\frg=4n$ and the signature of~$g$ is $(2n,2n)$. 

We now turn to the case in which the 2-forms defined in \eqref{eq:bilinear_forms} are closed, hence symplectic. It turns out that if one of the 2-forms $\omega_1$ or $\omega_3$ is closed, then they are all closed.
	
\begin{proposition}{\normalfont\cite[Proposition 5]{Andrada2006}}\label{prop:all_closed}
Let $(J,E)$ be a complex product structure on $\frg$. Let~$\omega_i$, $i=1,2,3$, be the 2-forms on $\frg$ given by \eqref{eq:bilinear_forms}. Then the following are equivalent:
\begin{itemize}
\item $\omega_1$ is closed.
\item $\omega_3$ is closed.
\end{itemize}
Furthermore, if one of the conditions above holds, then $\omega_2$ is also closed.
\end{proposition}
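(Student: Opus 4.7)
The plan is to exploit the $E$-eigenspace decomposition $\frg=\frg_+\oplus\frg_-$, which by the integrability of $E$ is a splitting into Lie subalgebras. The compatibility $g(EX,EY)=-g(X,Y)$ forces each $\frg_\pm$ to be $g$-isotropic, so that $g$ pairs $\frg_+$ with $\frg_-$ non-degenerately and vanishes on $\frg_\pm\times\frg_\pm$; together with $J\colon\frg_\pm\to\frg_\mp$ (a consequence of $JE=-EJ$), a direct evaluation shows that $\omega_1$ and $\omega_3$ vanish on $\frg_+\times\frg_-$, while $\omega_2$ vanishes on each $\frg_\pm\times\frg_\pm$. Moreover $\omega_1|_{\frg_+}=\omega_3|_{\frg_+}$ and $\omega_1|_{\frg_-}=-\omega_3|_{\frg_-}$, so that the symplectic restrictions $\omega_\pm:=\omega_1|_{\frg_\pm}$ from \cite[Lemma 3]{Andrada2006} are recovered.

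To prove $d\omega_1=0 \Leftrightarrow d\omega_3=0$ I would do a case analysis on how many of the three arguments of $d\omega_i(X,Y,Z)$ lie in $\frg_+$ versus $\frg_-$. When $X,Y,Z\in\frg_+$, every bracket stays in $\frg_+$ and $\omega_1$ agrees with $\omega_3$ there, so $d\omega_1(X,Y,Z)=d\omega_3(X,Y,Z)$; symmetrically, $X,Y,Z\in\frg_-$ gives $d\omega_1=-d\omega_3$. In a mixed case, say $X,Y\in\frg_+$, $Z\in\frg_-$, the $[X,Y]$-term vanishes for both forms (since $[X,Y]\in\frg_+$ and $Z\in\frg_-$), and the $[X,Z]$- and $[Y,Z]$-terms reduce, thanks to the isotropy of $\frg_\pm$ and $J\colon\frg_\pm\to\frg_\mp$, to pairings involving only the $\frg_+$-component of those brackets with the remaining $\frg_+$ vector; these contributions coincide for $\omega_1$ and $\omega_3$, whence $d\omega_1=d\omega_3$ again. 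The dual case $X\in\frg_+$, $Y,Z\in\frg_-$ yields $d\omega_1=-d\omega_3$. Summing the four cases gives the required equivalence.

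For the implication $d\omega_1=0 \Rightarrow d\omega_2=0$, the same decomposition reduces the claim to the mixed cases, as $\omega_2$ vanishes on each $\Lambda^3\frg_\pm$. For $X,Y\in\frg_+$ and $Z\in\frg_-$ I would expand $d\omega_2(X,Y,Z)$ using that $\omega_2$ pairs only \emph{transverse} components: the bracket $[X,Y]\in\frg_+$ is paired with $Z\in\frg_-$ through $g$, and the $\frg_-$-components $[X,Z]_-$, $[Y,Z]_-$ against the $\frg_+$-vectors $Y,X$. I would then invoke the integrability $N_J(X,Z)=0$ (and $N_J(Y,Z)=0$), split along $\frg_+\oplus\frg_-$, to rewrite $[X,Z]_-=[JX,JZ]_- - J[X,JZ]$ and similarly for $[Y,Z]_-$. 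After substitution, the expression for $d\omega_2(X,Y,Z)$ should match $d\omega_1(X,Y,JZ)$, which is a $(+++)$-type evaluation because $JZ\in\frg_+$, up to residual terms that cancel pairwise by antisymmetry; the remaining mixed case is symmetric.

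The main obstacle is exactly this last bookkeeping: the transverse brackets appearing in $d\omega_2$ are not directly those in $d\omega_1$, so the Nijenhuis identity must be invoked with care, split into its $\frg_+$ and $\frg_-$ components, and applied in just the right places, while all signs introduced by $E=\pm\Id$ on $\frg_\pm$ are tracked precisely. A more conceptual alternative uses the torsion-free connection identity $d\omega_i(X,Y,Z)=\sigma_{X,Y,Z}\, g((\nabla_X A_i)Y,Z)$ with $A_i\in\{J,E,JE\}$, but carrying it out requires the same ingredients, so I expect the direct case analysis to be the most transparent route.
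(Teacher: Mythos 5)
You should first note that the paper itself does not prove this proposition: it is quoted from Andrada's work with a citation and used as a black box, so there is no in-paper argument to measure yours against. Judged on its own terms, your strategy is sound and the first half is essentially complete: the observations that $\omega_1,\omega_3$ are block-diagonal for $\frg=\frg_+\oplus\frg_-$ (with $\omega_3=\omega_1$ on $\frg_+\times\frg_+$ and $\omega_3=-\omega_1$ on $\frg_-\times\frg_-$) while $\omega_2$ is purely off-diagonal follow correctly from the isotropy of $\frg_\pm$ and from $J\colon\frg_\pm\to\frg_\mp$, and since $\frg_\pm$ are subalgebras the four pure-type evaluations of $d\omega_1$ and $d\omega_3$ agree up to sign, which gives the equivalence.

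The one place where your write-up is not yet a proof is exactly the spot you flag, and the mechanism you propose there is not the right one: the residual terms in the $\omega_2$ computation do not ``cancel pairwise by antisymmetry.'' Carrying out your Nijenhuis substitution for $X,Y\in\frg_+$, $Z\in\frg_-$ gives (up to the overall sign convention for $d$)
\[
d\omega_2(X,Y,Z)=d\omega_1(X,Y,JZ)-g([JX,JZ],Y)+g([JY,JZ],X),
\]
and the leftover pair is not antisymmetric in any useful variables. It does vanish, but for a different reason: using $g(\cdot,J\cdot)=-g(J\cdot,\cdot)=-\omega_1(\cdot,\cdot)$, the two terms assemble into $-d\omega_1(JX,JY,JZ)-\omega_1([JX,JY],JZ)$, whose first summand vanishes by hypothesis (it is a $(-,-,+)$ evaluation of $d\omega_1$) and whose second vanishes because $[JX,JY]\in\frg_-$, $JZ\in\frg_+$ and $\omega_1$ is block-diagonal. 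With that supplied, together with the mirror case, your argument closes. You may also want to note the shorter route for this second half: since $J$ is symmetric with respect to $\omega_3$ and $\omega_2=\omega_3(J\cdot,\cdot)$, the form $\sigma=\omega_3-i\,\omega_2$ lies in $\Lambda^{2,0}\frg^*_\CC$; if $d\omega_3=0$ then $d\sigma=-i\,d\omega_2$ lies in $\Lambda^{3,0}\oplus\Lambda^{2,1}$ by integrability of $J$, while its conjugate $i\,d\omega_2$ lies in $\Lambda^{0,3}\oplus\Lambda^{1,2}$, forcing $d\omega_2=0$. This is the argument implicit in the paper's identification of symmetric complex structures with complex symplectic structures, and it avoids the Nijenhuis bookkeeping entirely.
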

	
\begin{definition}
Let $(J,E)$ be a complex product structure on the Lie algebra $\frg$ and let $g$ be a compatible metric. If $\omega_1$ or $\omega_3$ defined in \eqref{eq:bilinear_forms} are closed, we call $(J,E,g)$ a {\em hypersymplectic structure}; we refer to $\frg$ as a {\em hypersymplectic Lie algebra} and to $g$ as a {\em hypersymplectic metric}.
\end{definition}

This definition, together with \eqref{eq:bilinear_forms}, shows that if $(J,E,g)$ is a hypersymplectic structure on~$\frg$, then~$J$ is symmetric with respect to~$\omega_3$ and skew-symmetric with respect to~$\omega_1$; in other words, $(J,\omega_3)$ is a complex symplectic structure and $(J,\omega_1)$ is a pseudo-Kähler structure on $\frg$. In particular, $g$ is the pseudo-K\"ahler metric associated with $(J,\omega_1)$.

From now on, we consider interchangeably $g$, $\omega_1$ and $\omega_3$ as tensors of type $(0,2)$ and as maps from $\frg$ to $\frg^*$; as an example, for $X\in\frg$, $g(X)\in\frg^*$ is given by $g(X)(Y)=g(X,Y)$, for every $Y\in\frg$. The non-degeneracy ensures that all such maps are isomorphisms. 

The following lemma provides a key observation that motivates the construction developed in Section~\ref{section:hypersymplectic}.

\begin{lemma}\label{prop:E_hypersymplectic}
Let $(J,E,g)$ be a hypersymplectic structure on a Lie algebra $\frg$, with~$\omega_1$ and~$\omega_3$ defined as in \eqref{eq:bilinear_forms}. Then
\begin{equation}\label{eq:E_product_structure}
E=\omega_1^{-1}\circ\omega_3\,.
\end{equation}
\end{lemma}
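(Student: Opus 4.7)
The proof should be a short, direct computation once the three bilinear forms are reinterpreted as linear isomorphisms $\frg \to \frg^*$. My plan is to translate the definitions in \eqref{eq:bilinear_forms} into factorizations of $\omega_1$ and $\omega_3$ through $g$, $J$, and $E$, and then read \eqref{eq:E_product_structure} off algebraically.

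First I would unwind the definitions. Viewing $g, \omega_1, \omega_3 \colon \frg \to \frg^*$ as the musical-type maps associated with these bilinear forms, the identity $\omega_1(X,Y) = g(JX,Y)$ says exactly that, as maps $\frg \to \frg^*$,
\[
\omega_1 \;=\; g \circ J,
\]
and similarly $\omega_3(X,Y) = g((J\circ E)X, Y)$ gives
\[
\omega_3 \;=\; g \circ J \circ E.
\]
All three maps are invertible: $g$ because it is a non-degenerate scalar product, and $J, E$ because they are automorphisms of $\frg$ (with $J^{-1} = -J$ and $E^{-1} = E$). Consequently
\[
\omega_1^{-1} \;=\; J^{-1}\circ g^{-1},
\]
and composing yields
\[
\omega_1^{-1}\circ\omega_3 \;=\; J^{-1}\circ g^{-1}\circ g \circ J \circ E \;=\; J^{-1}\circ J \circ E \;=\; E,
\]
which is the claimed identity.

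There is essentially no obstacle here; the only point that requires care is keeping track of the domains so that the composition $\omega_1^{-1}\circ\omega_3\colon \frg\to\frg$ is meaningful. For clarity I would state at the outset the convention of identifying a $(0,2)$-tensor $T$ with the linear map $X\mapsto T(X,\cdot)$, as the excerpt already announces just above the lemma, and then perform the two-line composition above.
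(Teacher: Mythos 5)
Your proof is correct and follows essentially the same route as the paper: both read off $\omega_1 = g\circ J$ and $\omega_3 = g\circ J\circ E = \omega_1\circ E$ from \eqref{eq:bilinear_forms} as maps $\frg\to\frg^*$ and then invert $\omega_1$. The only difference is that you spell out the cancellation $J^{-1}\circ g^{-1}\circ g\circ J = \mathrm{Id}$ explicitly, which the paper leaves implicit.
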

\begin{proof}
By \eqref{eq:bilinear_forms} we have
$\omega_1=g\circ J$ and $\omega_3=g\circ J\circ E=\omega_1\circ E$, hence $E=\omega_1^{-1}\circ\omega_3$.
\end{proof}

\begin{remark}\label{rem:recursion}
Equation \eqref{eq:E_product_structure} can be rewritten as $\omega_1(EX)=\omega_3(X)$, hence $E$ is an example of a {\em recursion operator}, a terminology coming from Physics (see for instance \cite{Gutkin85,Zakharov84}). The study of the geometry of recursion operators has been undertaken by Bande and Kotschick in \cite{Bande-Kotschick}; in fact, there, a hypersymplectic structure is understood as a triple of symplectic structures whose recursion operators satisfy certain conditions.
\end{remark}


\section{Symmetric and skew-symmetric complex structures in 4d}\label{sec:CS4D}

In this section we classify 4-dimensional Lie algebras that admit a complex structure $J$ and a symplectic form $\omega$ such that $J$ is symmetric with respect to $\omega$; in other words, we classify 4-dimensional complex symplectic Lie algebras.

Let us first recall that a 4-dimensional symplectic Lie algebra is necessarily solvable, by a result of Chu \cite{Chu1974}. One of the first classifications of 4-dimensional solvable Lie algebras is given in \cite{Mubarakzyanov1963}. Among them, those admitting a symplectic structure can be found in~\cite{Ovando2006Sym}.
Complex structures on 4-dimensional solvable Lie algebras were classified in \cite{Snow1990} and \cite{Ovando2000}. 

Since we are interested in 4-dimensional solvable Lie algebras $\frg$ simultaneously admitting symplectic and complex structures, we have crossed the previous results and summarized them in Table~\ref{tabla1}. For the notation of Lie algebras and the description of complex structures we follow~\cite{Ovando2004}. However, we here rewrite the complex structure recalling that the 2-dimensional complex subspace $\fq$ of $\frg\otimes\CC$ given in~\cite{Ovando2004} plays the role of $\frg^{(0,1)}$ (see Section~\ref{subsec:complexOnLie}). 
For instance, a complex structure on the Lie algebra $\mathfrak{r}'_2$ is defined by $\fq_\xi=\langle e_1+\xi e_2,e_3+ie_4\rangle$, $\xi\in\CC$, $\Im(\xi)\neq0$, which means $J_\xi(e_1+\xi e_2)=-i(e_1+\xi e_2)$ and $J_\xi(e_3+i e_4)=-i(e_3+i e_4)$. Taking the real and imaginary part, we see that the complex structure is then given by
\begin{equation}\label{Jxi}
J_\xi(e_1)=\frac{\Re(\xi)}{\Im(\xi)}e_1+\frac{|\xi|^2}{\Im(\xi)}e_2\quad \mathrm{and} \quad J_\xi(e_3)=e_4\,.
\end{equation}
Also notice that the elements appearing in the last column of Table~\ref{tabla1} are generic 2-cocycles, and that the additional condition ensures the non-degeneracy. We will work with this description of symplectic structures. Nonetheless, there is a notion of equivalence for symplectic Lie algebras; the classification of 4-dimensional symplectic real Lie algebras up to equivalence can be found in \cite{Ovando2006Sym}.

\begin{landscape}
\topskip2pt
\vspace*{\fill}

\renewcommand{\arraystretch}{1.4}
\begin{table}[htb!]
\scalebox{0.8}{
\begin{tabular}{|c| l | c | l | l |}
\hline
SLA & Structure equations & \multicolumn{2}{l |}{Complex structures} & Symplectic structures \\
\hline\hline
$\mathfrak{rh}_3$ & $(0,0,-12,0)$
	& \multicolumn{2}{l |}{$Je_1=-e_2, \ Je_3=-e_4$}
	& $\begin{array}{l}
		\omega=a_{12}e^{12}+a_{13}e^{13}+a_{14}e^{14}+a_{23}e^{23}+a_{24}e^{24},\\[-4pt]
		\text{with } a_{14}a_{23}-a_{13}a_{24}\neq 0
	\end{array}$\\ \hline\hline
$\mathfrak{rr}_{3,0}$ & $(0,-12,0,0)$ 
	& \multicolumn{2}{l |}{$Je_1=e_2, \ Je_3=e_4$}
	& $\begin{array}{l}
		\omega=a_{12}e^{12}+a_{13}e^{13}+a_{14}e^{14}+a_{34}e^{34},\\[-4pt]
		\text{with } a_{12}a_{34}\neq 0
	\end{array}$
	\\ \hline\hline
$\mathfrak{rr'}_{3,0}$ & $(0,-13,12,0)$
	& \multicolumn{2}{l |}{$Je_1=e_4, \ Je_2=e_3$}
	& $\begin{array}{l}
		\omega=a_{12}e^{12}+a_{13}e^{13}+a_{14}e^{14}+a_{23}e^{23},\\[-4pt]
		\text{with } a_{14}a_{23}\neq 0
	\end{array}$
	\\ \hline\hline
$\mathfrak{r}_2\mathfrak{r}_2$ & $(0,-12,0,-34)$
	& \multicolumn{2}{l |}{$Je_1=e_2, \ Je_3=e_4$}
	& $\begin{array}{l}
		\omega=a_{12}e^{12}+a_{13}e^{13}+a_{34}e^{34},\\[-4pt] 
		\text{with }a_{12}a_{34}\neq 0
	\end{array}$
	\\ \hline\hline
\multirow{3}{*}{$\mathfrak{r}'_2$} & \multirow{3}{*}{$(0,0,-13+24,-14-23)$} 
	& \multicolumn{2}{l |}{$Je_1=e_3, \ Je_2=e_4$}
	& \multirow{3}{*}{ $\begin{array}{l}
		\omega=a_{12}e^{12}+a_{13}(e^{13}-e^{24})+a_{14}(e^{14}+e^{23}),\\[-4pt]
		\text{with }a^2_{14}+a^2_{13}\neq 0
	\end{array}$ }
	\\ \cline{3-4}
	&	 
	& \multicolumn{2}{l |}{
		$J_{\xi}e_1=\frac{1}{\Im(\xi)}\left( \Re(\xi)\,e_1+|\xi|^2\,e_2\right), \ J_{\xi} e_3=e_4$,}
	& \\ 
	&  
	& \multicolumn{2}{l |}{where $\xi\in\mathbb C$, $\Im(\xi)\neq 0$}
	& \\ \hline\hline
$\mathfrak{r}_{4,-1,-1}$ & $(14,-24,-34,0)$ 
	& \multicolumn{2}{l |}{$Je_1=-e_4, \ Je_2=e_3$}
	& $\begin{array}{l}
		\omega=a_{12}e^{12}+a_{13}e^{13}+a_{14}e^{14}+a_{24}e^{24}+a_{34}e^{34},\\[-4pt] 
		\text{with }a_{12}a_{34}-a_{13}a_{24}\neq 0
	\end{array}$
	\\ \hline\hline
$\begin{array}{c} \mathfrak{r'}_{4,0,\delta} \\ \delta>0\end{array}$ 
	& $(14,\delta\,34,-\delta\,24,0)$ 
	& \multicolumn{2}{l |}{$J_{\pm}e_1=-e_4, \ J_{\pm}e_2=\pm e_3$}
	& $\begin{array}{l} 
		\omega=a_{14}e^{14}+a_{23}e^{23}+a_{24}e^{24}+a_{34}e^{34},\\[-4pt] 
		\text{with }a_{14}a_{23}\neq 0
	\end{array}$
	\\ \hline\hline
\multirow{5}{*}{$\begin{array}{c} \mathfrak{d}_{4,\lambda} \\ \lambda\geq \frac12\end{array}$} 
	& \multirow{5}{*}{$(\lambda\,14,(1-\lambda)\,24,-12+34,0)$} 
	& \multirow{2}{*}{$\lambda=\frac 12$}
	& $J_{\pm}e_1=\pm e_2, \ J_{\pm}e_3=-e_4$
	& \multirow{5}{*}{\!\!\!
	\begin{tabular}{l}$\omega=a_{12}(e^{12}-e^{34})+a_{14}e^{14}+a_{23}e^{23}+a_{24}e^{24}$,\\
	with $(\lambda-2)\,a_{23}=0$, $a^2_{12}-a_{14}a_{23}\neq 0$ \end{tabular}}
	\\ \cline{4-4}
 
	&  
	& 
	& $Je_1= -e_4, \ Je_2=-2\,e_3$
	& 	
	\\ \cline{3-4}
	&	
	& $\lambda=1$
	& $Je_1= e_4, \ Je_2=e_3$
	& 
	\\ \cline{3-4}
	&	
	& \multirow{2}{*}{$\lambda\neq \frac 12,1$}
	& $J_1e_1= \frac{1}{\lambda}\,e_4, \ J_1e_2=e_3$
	&  
	\\ \cline{4-4}
	&
	&
	& $J_2e_1= e_3, \ J_2e_2=\frac{1}{\lambda-1}\,e_4$
	& 
	\\ \hline\hline
\multirow{2}{*}{$\begin{array}{c}\mathfrak{d}'_{4,\delta}\\ \delta>0 \end{array}$} & 
	\multirow{2}{*}{$\big(\frac{\delta}{2}\,14+24,-14+\frac{\delta}{2}\,24,-12+\delta\,34,0\big)$} 
	& \multicolumn{2}{l |}{$Je_1=a\,e_2, \ Je_3=b\,e_4$}
	& \multirow{2}{*}{ $\begin{array}{l}
		\omega=a_{12}(e^{12}-\delta\,e^{34})+a_{14}e^{14}+a_{24}e^{24},\\[-4pt]
		\text{with }a_{12}\neq 0
	\end{array}$ }
	\\ 
	&	
	& \multicolumn{2}{l |}{where $a,\,b\in\{-1, 1\}$}
	& \\ \hline\hline
$\mathfrak{h}_{4}$ & $\big(\frac{1}{2}\,14+24,\frac{1}{2}\,24,-12+34,0\big)$ 
	& \multicolumn{2}{l |}{$Je_1=2\,e_3, \ Je_2= -e_4$}
	& $\begin{array}{l}
		\omega=a_{12}(e^{12}-e^{34})+a_{14}e^{14}+a_{24}e^{24},\\[-4pt] 
		\text{with }a_{12}\neq 0
	\end{array}$
	\\ \hline
\end{tabular}
}
\vskip 0.25 cm
\caption{$4$-dimensional solvable non-abelian Lie algebras admitting complex and symplectic structures}
\label{tabla1}
\end{table}
\vspace*{\fill}
\end{landscape}

\subsection{4-dimensional Lie algebras with a symmetric complex structure}\label{sec:4DSYM}

We classify 4-dimensional Lie algebras endowed with a complex structure which is symmetric with respect to a symplectic structure, that is, we classify complex symplectic 4-dimensional Lie algebras. For this classification, two complex symplectic Lie algebras $(\frg_1,J_1,\omega_1)$ and $(\frg_2,J_2,\omega_2)$ are considered {\em equivalent} if there exists a Lie algebra isomorphism $\varphi\colon \frg_1\to\frg_2$ such that $J_2\circ\varphi=\varphi\circ J_1$ and $\varphi^*\omega_2=\omega_1$.

\begin{theorem}\label{theo:complex-symplectic_classification}
A 4-dimensional Lie algebra $\frg$ admits a complex symplectic structure $(J,\omega)$ if and only if $(\frg,J,\omega)$ is one of the following:
\end{theorem}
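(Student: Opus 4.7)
My plan is to work case by case through Table~\ref{tabla1}, using the equivalent characterization of the symmetry condition in terms of bidegree. Recall that $J$ is symmetric with respect to a 2-form $\omega$ if and only if $\omega(JX,JY)=-\omega(X,Y)$, which is to say $\omega\in(\Lambda^{2,0}\oplus\Lambda^{0,2})\frg^*_\CC$. Since we are in real dimension 4, the complex dimension is 2 and $\dim_\CC\Lambda^{2,0}\frg^*_\CC=1$. Therefore, once $(\frg,J)$ is fixed, the space of candidate symmetrizing 2-forms is a \emph{single} complex line, spanned by $\omega_\CC=\zeta^1\wedge\zeta^2$ for any basis $\{\zeta^1,\zeta^2\}$ of $\frg^{*(1,0)}$; any nonzero multiple of this line is automatically non-degenerate (so the non-degeneracy conditions recorded in Table~\ref{tabla1} can largely be ignored in this step). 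The only remaining constraint is closedness, $d\omega_\CC=0$, which lives in $\Lambda^{2,1}\frg^*_\CC$ since $\Lambda^{3,0}\frg^*_\CC=0$.

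Accordingly, for each Lie algebra $\frg$ in Table~\ref{tabla1} and each listed complex structure $J$, I would first exhibit an explicit basis of $(1,0)$-forms $\{\zeta^1,\zeta^2\}$ computed from the real basis using $J$, then form $\omega_\CC=\zeta^1\wedge\zeta^2$ and apply $d$ via the structure equations. The outcome of this single computation is binary: either $d(\zeta^1\wedge\zeta^2)=0$, in which case every nonzero multiple $c\,\omega_\CC$ gives a complex symplectic structure, or $d(\zeta^1\wedge\zeta^2)\neq0$, in which case no complex symplectic structure exists for that pair $(\frg,J)$. Taking real parts of the surviving $\omega_\CC$ recovers the real symplectic form $\omega=\omega_\CC+\overline{\omega_\CC}$ and identifies the corresponding row of Table~\ref{tabla1} (those rows where the generic closed 2-form can be specialized to lie in $\Lambda^{2,0}\oplus\Lambda^{0,2}$, yielding the linear relations on the coefficients $a_{ij}$).

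The last step is to reduce the resulting one-parameter family $c\cdot\omega_\CC$ to a normal form up to the equivalence relation defined at the beginning of Section~\ref{sec:4DSYM}. For each pair $(\frg,J)$ that survives, I would exhibit an explicit Lie algebra automorphism $\varphi$ of $\frg$ with $\varphi\circ J=J\circ\varphi$ and $\varphi^*\omega_\CC=\lambda\,\omega_\CC$ for a suitable $\lambda\in\RR^{\neq0}$ (or $\CC^{\neq0}$, remembering that we may rephase the basis $\zeta^1,\zeta^2$); this collapses the parameter $c$ to a canonical representative, typically $c=1$ (or two representatives, e.g.\ $c=\pm 1$, when only the sign cannot be absorbed). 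Cases like $\mathfrak{r}'_2$ and $\mathfrak{d}_{4,\lambda}$, which appear in the table with families of inequivalent complex structures parameterized by $\xi\in\CC$ or $\lambda\in\RR$, must be treated with extra care: one must check whether different values of these parameters yield equivalent complex symplectic Lie algebras.

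The routine part is the case-by-case verification of closedness, which amounts to a short bookkeeping exercise from the structure equations; the main obstacle is the equivalence step, namely writing down enough $J$-preserving automorphisms to normalize the scaling factor and to decide whether complex structures carrying an additional parameter (such as $J_\xi$ on $\mathfrak{r}'_2$ or the family $J_1,J_2$ on $\mathfrak{d}_{4,\lambda}$) collapse to a single complex symplectic type or remain distinct. The final list is then assembled by collecting, for each Lie algebra in Table~\ref{tabla1}, the complex structures for which $d\omega_\CC=0$ has a nonzero solution and pairing each of them with its normalized $\omega$.
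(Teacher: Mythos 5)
Your approach is correct, and it organizes the computation genuinely differently from the paper. The paper starts from the generic \emph{closed} $2$-form on each $(\frg,J)$ of Table~\ref{tabla1}, imposes the symmetry condition $\omega(JX,Y)=\omega(X,JY)$ as linear equations on the coefficients $a_{ij}$, and then must check that the surviving form is still non-degenerate (this is how, e.g., $\mathfrak{rr}_{3,0}$ is ruled out: the symmetry equations force $\omega=0$). You instead start from the algebraic condition, observing that in complex dimension $2$ the candidates fill a single complex line $\CC\cdot\zeta^1\wedge\zeta^2\subset\Lambda^{2,0}\frg^*_\CC$ on which non-degeneracy is automatic, since $\omega\wedge\omega=2\,\omega_\CC\wedge\overline{\omega_\CC}\neq 0$; the only thing left to verify is $d(\zeta^1\wedge\zeta^2)=0$, which, as you note, also controls closedness of the real form because $d\omega_\CC$ and $d\overline{\omega_\CC}$ live in complementary bidegrees. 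This trades the paper's reliance on the pre-tabulated cocycle spaces for a single exterior-derivative computation per complex structure, and it makes transparent why the answer for each surviving $(\frg,J)$ is a punctured complex line before normalization. The normalization step is then the same in both treatments: compute how the $J$-commuting automorphisms scale $\Lambda^{2,0}\cong\CC$. Two small corrections to your stated expectations, neither of which breaks the method: for families of complex structures such as $J_\xi$ on $\mathfrak{r}'_2$ the closedness check is not binary but cuts out a locus in the parameter (only $\xi=i$ survives), and for $\mathfrak{d}_{4,2}$ the $J_2$-commuting automorphisms act on $\Lambda^{2,0}$ only through real scalings, so the quotient is a full $\RR\mathbb{P}^1$ of pairwise inequivalent structures rather than one or two representatives --- exactly the last row of Table~\ref{table:Complex-symplectic structures}.
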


\begin{table}[h!]
\begin{center}
{\tabulinesep=1.2mm
\begin{tabu}{lccccccc}
\toprule[1.5pt]
$\frg$ & Complex structure & Symplectic structure\\
\specialrule{1pt}{0pt}{0pt}
$\RR^4$ & $J(e_1)=-e_2$, $J(e_3)=-e_4$ & $\omega=e^{14}+e^{23}$\\
\specialrule{1pt}{0pt}{0pt}
$\mathfrak{rh}_3$ & $J(e_1)=-e_2$, $J(e_3)=-e_4$ & $\omega=e^{14}+e^{23}$\\
\specialrule{1pt}{0pt}{0pt}
$\mathfrak{r}'_2$ & $J_i(e_1)=e_2$, $J_i(e_3)=e_4$ & $\omega=e^{13}-e^{24}$\\
\specialrule{1pt}{0pt}{0pt}
$\mathfrak{r}_{4,-1,-1}$ & $J(e_1)=-e_4$, $J(e_2)=e_3$ & $\omega=e^{12}-e^{34}$\\
\specialrule{1pt}{0pt}{0pt}
$\mathfrak{d}_{4,2}$& $J_2(e_1)=e_3$, $J_2(e_2)=e_4$ & $\omega=a\left(e^{12}-e^{34}\right)+b\left(e^{14}-e^{23}\right), [a:b]\in\RR\mathbb{P}^1$\\
\bottomrule[1pt]
\end{tabu}}
\vskip 0.25 cm
\caption{4-dimensional Lie algebras with a symmetric complex structure}\label{table:Complex-symplectic structures}
\end{center}
\end{table}

\begin{proof}
We know that such a Lie algebra must be solvable. Proposition 5.4 in \cite{BFLM2018} takes care of the nilpotent Lie algebras, accounting for the first two lines in Table \ref{table:Complex-symplectic structures}. Hence, we concentrate on solvable non-nilpotent Lie algebras.

The proof consists of a case-by-case check; for each Lie algebra admitting both complex and symplectic structures, given in Table~\ref{tabla1}, we check the symmetry of the complex structure $J$ with respect to the generic symplectic structure $\omega$. As a first step, we check that Table~\ref{table:Complex-symplectic structures} contains complex symplectic Lie algebras.

\textbf{Case $\mathfrak{r}'_{2}$} This Lie algebra admits two non-equivalent complex structures. One sees that the first one is not symmetric with respect to any symplectic form. The second one, $J_\xi$, depends on $\xi\in\CC$ with $\Im(\xi)\neq0$ and is given by \eqref{Jxi}. The generic symplectic form is
\[
\omega=a_{12}e^{12}+a_{13}\left(e^{13}-e^{24}\right)+a_{14}\left(e^{14}+e^{23}\right)\,, \quad a_{13}^2+a_{14}^2\neq 0\,.
\]
The symmetry condition $\omega(J_\xi X,Y)=\omega(X,J_\xi Y)$ yields $a_{12}=0$ and
\[
\left\{
\begin{array}{lcc}
a_{14}(|\xi|^2-\Im(\xi))+a_{13}\Re(\xi) & = & 0\\
a_{13}(|\xi|^2-\Im(\xi))-a_{14}\Re(\xi) & = & 0\\
a_{13}(\Im(\xi)-1)-a_{14}\Re(\xi) & = & 0\\
a_{14}(\Im(\xi)-1)+a_{13}\Re(\xi) & = & 0
\end{array}
\right.
\]
For the last two equations, wee see that since $a_{13}^2+a_{14}^2\neq 0$, the only solution is $\Re(\xi)=0$, $\Im(\xi)=1$, hence $\xi=i$ and we obtain the complex symplectic structure $(J_i,\omega)$, with
\[
\omega=a_{13}\left(e^{13}-e^{24}\right)+a_{14}\left(e^{14}+e^{23}\right)\,, \quad a_{13}^2+a_{14}^2\neq 0\,.
\]
For $r\neq 0$ and $\theta\in\RR$, the Lie algebra automorphism $\psi=\psi_{r,\theta}$ given by
\[
\psi(e_1)=e_1\,, \ \psi(e_2)=e_2\,, \ \psi(e_3)=r(\cos\theta e_3-\sin\theta e_4) \ \mathrm{and} \ \psi(e_4)=r(\cos\theta e_4+\sin\theta e_3)
\]
satisfies $J_i\circ\psi=\psi\circ J_i$ and 
\[
\psi^*\omega=r(a_{13}\cos\theta-a_{14}\sin\theta)\left(e^{13}-e^{24}\right)+r(a_{13}\sin\theta+a_{14}\cos\theta)\left(e^{14}+e^{23}\right)\,.
\]
Since $a_{13}^2+a_{14}^2\neq 0$, one always finds $\theta$ such that $a_{13}\sin\theta+a_{14}\cos\theta=0$; also, by suitably choosing $r$, one normalizes the coefficient of $e^{13}-e^{24}$. Hence every complex symplectic structure on $\mathfrak{r}'_2$ is equivalent to $\left(J_i,e^{13}-e^{24}\right)$.

\textbf{Case $\mathfrak{r}_{4,-1,-1}$} 
A generic closed and non-degenerate 2-form on $\mathfrak{r}_{4,-1,-1}$ is given by
\[
\omega=a_{12}e^{12}+a_{13}e^{13}+a_{14}e^{14}+a_{24}e^{24}
+a_{34}e^{34}\,, \quad a_{12}a_{34}-a_{13}a_{24}\neq 0\,.
\]
The only complex structure $J\colon\mathfrak{r}_{4,-1,-1}\to\mathfrak{r}_{4,-1,-1}$ is given by $J(e_1)=-e_4$, $J(e_2)=e_3$. Imposing the symmetry condition we obtain $a_{14}=0$, $a_{24}=a_{13}$ and $a_{34}=-a_{12}$, thus the complex symplectic structure is
\[
\omega=a_{12}\left(e^{12}-e^{34}\right)+a_{13}\left(e^{13}+e^{24}\right)\,, \quad a_{12}^2+a_{13}^2\neq 0\,.
\]
For $r\neq 0$ and $\theta\in\RR$, the Lie algebra automorphism $\varphi=\varphi_{r,\theta}$ given by
\[
\varphi(e_1)=e_1\,, \ \varphi(e_2)=r(\cos\theta e_2-\sin\theta e_3)\,, \ \varphi(e_3)=r(\cos\theta e_3+\sin\theta e_2) \ \mathrm{and} \ \varphi(e_4)=e_4
\]
satisfies $J\circ\varphi=\varphi\circ J$ and 
\[
\varphi^*\omega=r(a_{12}\cos\theta-a_{13}\sin\theta)\left(e^{12}-e^{34}\right)+r(a_{12}\sin\theta+a_{13}\cos\theta)\left(e^{13}+e^{24}\right)\,.
\]
Since $a_{12}^2+a_{13}^2\neq 0$, one always finds $\theta$ such that $a_{12}\sin\theta+a_{13}\cos\theta=0$; moreover, by suitably choosing $r$, the coefficient of $e^{12}-e^{34}$ can be normalized. Hence every complex symplectic structure on $\mathfrak{r}_{4,-1,-1}$ is equivalent to $\left(J,e^{12}-e^{34}\right)$.
		
\textbf{Case $\mathfrak{d}_{4,2}$} This Lie algebra admits two non-equivalent complex structures. A computation shows that the first one  is not symmetric with respect to any symplectic form. The second one is given by $J_2(e_1)=e_3$ and $J_2(e_2)=e_4$. The generic symplectic form on $\frd_{4,2}$ is
\[
\omega=a_{12}\left(e^{12}-e^{34}\right)+a_{14}e^{14}+a_{23}e^{23}+a_{24}e^{24}\,, \quad a_{12}^2-a_{14}a_{23}\neq 0\,.
\]
Imposing the symmetry condition gives $a_{24}=0$ and $a_{23}=-a_{14}$, hence we get the complex symplectic structure
\[
\omega=a_{12}\left(e^{12}-e^{34}\right)+a_{14}\left(e^{14}-e^{23}\right)\,, \quad a_{12}^2+a_{14}^2\neq 0\,.
\]
For every $r\neq 0$ the map $\phi_r\colon\mathfrak{d}_{4,2}\to\mathfrak{d}_{4,2}$ given by
\[
\phi_r(e_1)=r^{-1}e_1\,, \ \phi_r(e_2)=e_2\,, \ \phi_r(e_3)=r^{-1}e_3 \ \mathrm{and} \ \phi_r(e_4)=e_4
\] 
is a Lie algebra automorphism and, moreover, $J_2\circ\phi_r=\phi_r\circ J_2$. If $a_{12}\neq 0$, then $\phi_{a_{12}}^*\omega=e^{12}-e^{34}+b\left(e^{14}-e^{23}\right)$, $b\in\RR$. One sees that, for $b\neq b'$, two such symplectic forms are not symplectomorphic. If $a_{12}=0$, then $\phi_{a_{14}}^*\omega=e^{14}-e^{23}$. Arguing with $a_{14}$ instead of $a_{12}$, we see that there is an $\RR\mathbb{P}^1$ of symplectic forms,
\[
\omega_{[a:b]}=a\left(e^{12}-e^{34}\right)+b\left(e^{14}-e^{23}\right)\,, \quad [a:b]\in\RR\mathbb{P}^1\,,
\]
which are pairwise non symplectomorphic.

To finish the proof we need to check that the rest of Lie algebras admitting both symplectic and complex structures do not admit complex symplectic structures. This is done by imposing the symmetry condition $\omega(JX,Y)=\omega(X,JY)$ to a complex structure with respect to a generic closed 2-form $\omega$, as given by Table \ref{tabla1}, and checking that the resulting $\omega$ is degenerate, hence not symplectic. To illustrate this process, let us consider $\mathfrak{rr}_{3,0}$. This Lie algebra has a unique complex structure $J$, given by $J(e_1)=e_2$, $ J(e_3)=e_4$, while the generic closed 2-form is $\omega=a_{12}e^{12}+a_{13}e^{13}+a_{14}e^{14}+a_{34}e^{34}$. Imposing the symmetry condition yields $a_{12}=a_{13}=a_{14}=a_{34}=0$, hence $\omega$ is not symplectic.
\end{proof}
	
\begin{remark}
As a result of our classification, we see that the following Lie algebras admit both complex and symplectic structures, but the complex structure is never symmetric with respect to a symplectic structure: 
\[
\mathfrak{rr}_{3,0}\,, \ \mathfrak{rr}'_{3,0}\,, \ \mathfrak{r}_2\mathfrak{r}_2\,, \ \mathfrak{r}'_{4,0,\delta}\,, \ \mathfrak{d}_{4,\lambda}\,, \lambda\neq 2\,, \ \mathfrak{d}'_{4,\delta} \ \mathrm{and} \ \mathfrak{h}_4.
\]
\end{remark}

\subsection{Lie algebras with symmetric and skew-symmetric complex structures}

The classification of 4-dimensional Lie algebras endowed with a complex structure which is skew-symmetric with respect to a symplectic structure, i.e.~of pseudo-Kähler 4-dimensional Lie algebras, was obtained by Ovando \cite{Ovando2006PK}. According to this classification, the Lie algebras $\frh_4$ and $\mathfrak{d}_{4,\lambda}$ with $\lambda\notin\left\{1,2,\frac{1}{2}\right\}$ admit both a complex structure and a symplectic structure, but the complex structure is not skew-symmetric with respect to any symplectic structure.

Combining these result with our results from Section \ref{sec:4DSYM}, we see that the 4-dimensional Lie algebras admitting a complex structure which is symmetric with respect to a symplectic structure are a subset of those admitting a complex structure which is skew-symmetric. The precise results are contained in Table \ref{table:Structures_Lie_algebras_I}.

\begin{table}[h!]
\begin{center}
{\tabulinesep=1.2mm
\begin{tabu}{ccc|ccc}
\toprule[1.5pt]
Lie algebra & Sym (cs) & Skew (pK) & Lie algebra & Sym (cs) & Skew (pK)\\
\specialrule{1pt}{0pt}{0pt}
$\mathfrak{rh}_3$ & $\checkmark$ & $\checkmark$ & $\mathfrak{r}'_{4,0,\delta}$ & $\times$ & $\checkmark$ \\
\specialrule{1pt}{0pt}{0pt}
$\mathfrak{rr}_{3,0}$ & $\times$ & $\checkmark$ & $\mathfrak{d}_{4,1}$ & $\times$ & $\checkmark$\\
\specialrule{1pt}{0pt}{0pt}
$\mathfrak{rr}'_{3,0}$ & $\times$ & $\checkmark$ & $\mathfrak{d}_{4,2}$ &$\checkmark$&$\checkmark$\\
\specialrule{1pt}{0pt}{0pt}
$\mathfrak{r}_2\mathfrak{r}_2$ & $\times$ & $\checkmark$ & $\mathfrak{d}_{4,\frac{1}{2}}$ & $\times$ & $\checkmark$\\
\specialrule{1pt}{0pt}{0pt}
$\mathfrak{r}'_2$ & $\checkmark$&$\checkmark$ & $\mathfrak{d}'_{4,\delta}$ & $\times$ & $\checkmark$\\
\specialrule{1pt}{0pt}{0pt}
$\mathfrak{r}_{4,-1,-1}$ & $\checkmark$ & $\checkmark$\\
\bottomrule[1pt]
\end{tabu}}
\vskip 0.25 cm
\caption{4-dimensional non-abelian Lie algebras with symmetric or skew-symmetric complex structures}\label{table:Structures_Lie_algebras_I}
\end{center}
\end{table}
	
In particular, we see that for 4-dimensional Lie algebras the existence of a complex symplectic pair $(J_{cs},\omega_{cs})$ implies the existence of a pseudo-Kähler pair $(J_{pK},\omega_{pK})$. In fact, the Lie algebras $\mathfrak{rh}_3$ and $\mathfrak{r}_{4,-1,-1}$ only admit one complex structure, hence $J_{pK}=J_{cs}$ in these cases. $\mathfrak{d}_{4,2}$ admits two non-equivalent complex structures $J_1$ and $J_2$. $J_1$ is skew-symmetric with respect to a symplectic form but it is not symmetric with respect to any symplectic form; $J_2$ is both symmetric and skew-symmetric with respect to certain symplectic forms. $\mathfrak{r}'_2$ admits non-equivalent complex structures $J$ and $J_\xi$, the second one being parametrized by $\xi\in\CC$ with $\Im(\xi)\neq0$. $J_i$ is symmetric with respect to some symplectic form, but not skew-symmetric with respect to any symplectic form,  while $J$ and $J_{-i}$ are skew-symmetric with respect to certain symplectic forms, but not symmetric with respect to any symplectic form. The Lie algebra $\mathfrak{r}'_2$ is not unimodular, hence, by a result of Milnor \cite{Milnor1976}, it does not admit any compact quotient. In \cite[Theorem 1.1]{Yamada2017}, Yamada constructed an 8-dimensional (compact) nilmanifold $M$ with two complex structures $J_1$ and $J_2$ such that $J_1$ is skew-symmetric with respect to some symplectic form, but not symmetric, and viceversa for $J_2$. 

%
\section{Hypersymplectic structures}\label{section:hypersymplectic}
	
We have seen that every hypersymplectic structure on a Lie algebra $\frg$ provides a complex symplectic structure and a pseudo-K\"ahler structure on $\frg$ that share the same complex structure.  
In this section we first consider the 4-dimensional case, and then present a method to obtain hypersymplectic structures starting from a complex structure and two symplectic forms with respect to which the complex structure is symmetric and skew-symmetric.

\subsection{4-dimensional Lie algebras with a hypersymplectic structure}\label{sec:4DHS}
Hypersymplectic structures on 4-dimensional Lie algebras have been classified by Andrada; since the underlying Lie algebra is in particular symplectic, the Lie algebra is necessarily solvable.
	
\begin{theorem}{\normalfont\cite[Theorem 23]{Andrada2006}}\label{theo:hypersymplectic_classification}
Let $\frg$ be a 4-dimensional Lie algebra carrying a hypersymplectic structure. Then $\frg$ is isomorphic to 
\[
\RR^4\,, \quad \mathfrak{rh}_3\,, \quad \mathfrak{r}_{4,-1,-1} \quad \mathrm{or} \quad \mathfrak{d}_{4,2}.
\]
\end{theorem}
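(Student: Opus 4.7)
The plan is to combine the two classifications developed in Section~\ref{sec:CS4D} with Ovando's classification of 4-dimensional pseudo-K\"ahler Lie algebras, exploiting the observation from Section~\ref{subsec:hyperSympl} that every hypersymplectic structure $(J,E,g)$ produces a complex symplectic pair $(J,\omega_3)$ and a pseudo-K\"ahler pair $(J,\omega_1)$ on $\frg$, both featuring the \emph{same} complex structure $J$. Consequently any 4-dimensional hypersymplectic Lie algebra must appear in both classifications simultaneously, with matching $J$.

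First I would intersect the two lists, which by Theorem~\ref{theo:complex-symplectic_classification} and Ovando's classification yields the entries marked twice in Table~\ref{table:Structures_Lie_algebras_I}, namely $\mathfrak{rh}_3$, $\mathfrak{r}'_2$, $\mathfrak{r}_{4,-1,-1}$ and $\mathfrak{d}_{4,2}$, together with the abelian $\RR^4$. The next step is to discard $\mathfrak{r}'_2$: as explained after Table~\ref{table:Structures_Lie_algebras_I}, on $\mathfrak{r}'_2$ the symmetric complex structure $J_i$ is not skew-symmetric with respect to any symplectic form, while the skew-symmetric complex structures $J$ and $J_{-i}$ are not symmetric with respect to any symplectic form. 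Hence no single complex structure on $\mathfrak{r}'_2$ can serve as the $J$ of a hypersymplectic triple, and this algebra is ruled out.

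For each of the four remaining candidates $\RR^4$, $\mathfrak{rh}_3$, $\mathfrak{r}_{4,-1,-1}$ and $\mathfrak{d}_{4,2}$, I would then exhibit an explicit hypersymplectic structure. The recipe is dictated by Lemma~\ref{prop:E_hypersymplectic}: starting from the complex symplectic pair $(J,\omega_3)$ given in Table~\ref{table:Complex-symplectic structures}, together with a pseudo-K\"ahler partner $(J,\omega_1)$ from Ovando's list sharing the \emph{same} $J$, define $E:=\omega_1^{-1}\circ\omega_3$ and let $g$ be the pseudo-K\"ahler metric determined by $(J,\omega_1)$. It remains to verify, case by case, that $E^{2}=\Id$, $E\neq\pm\Id$, $JE=-EJ$, $N_E\equiv 0$, and that $g$ obeys the compatibility conditions \eqref{eq:compatible_metric}. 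Once this is done, Proposition~\ref{prop:all_closed} automatically yields closedness of $\omega_2$, so $(J,E,g)$ is genuinely hypersymplectic.

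The main obstacle lies in this last construction: within the parametrised families provided by Table~\ref{table:Complex-symplectic structures} and Ovando's classification one has to select coefficients so that $\omega_1^{-1}\circ\omega_3$ becomes an integrable product structure anti-commuting with $J$, and then verify the metric compatibilities. The negative half of the theorem---excluding every other 4-dimensional Lie algebra---is immediate from cross-referencing the two classifications and the exclusion of $\mathfrak{r}'_2$; the positive half is where the actual case-by-case computation takes place.
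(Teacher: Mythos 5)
Your argument is correct for the statement as given, but note that the paper does not prove this theorem at all: it is quoted verbatim from \cite[Theorem 23]{Andrada2006}, whose own proof goes through a classification of complex product structures and their compatible metrics. What you propose is instead the alternative, self-contained derivation that the paper itself only gestures at in the remark following Theorem~\ref{theo:main}: intersect Theorem~\ref{theo:complex-symplectic_classification} with Ovando's pseudo-K\"ahler classification, use the fact that a hypersymplectic triple forces a \emph{single} $J$ to be simultaneously symmetric and skew-symmetric, and discard $\mathfrak{r}'_2$ because there $J_i$ is symmetric but never skew-symmetric while $J$ and $J_{-i}$ are skew-symmetric but never symmetric. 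Since the theorem only asserts the necessity direction (``carries a hypersymplectic structure $\Rightarrow$ isomorphic to one of the four''), this cross-referencing already completes the proof; the existence computations you defer to the end are not required by the statement, so their omission is not a gap. Two small streamlinings: for the positive half you do not need to check $E\neq\pm\Id$, $JE=-EJ$, $N_E\equiv 0$ and the compatibility \eqref{eq:compatible_metric} separately, since Lemmas~\ref{lem:skewEJ}, \ref{lemma:gE_compatible} and \ref{lemma:E_integrable} (packaged as Theorem~\ref{theo:main}) reduce everything to the single condition $E^2=\Id$; and when intersecting the classifications you should be explicit that ``matching $J$'' means matching up to automorphism, which is how the paper's statement about $\mathfrak{r}'_2$ is to be read. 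With those caveats, your route buys a proof internal to the paper's framework, at the cost of depending on the full strength of Theorem~\ref{theo:complex-symplectic_classification} and of Ovando's classification, whereas the paper simply imports Andrada's result.
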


Andrada actually gives a classification of all possible hypersymplectic metrics on each of these Lie algebras; we refer to \cite{Andrada2006} for further details. Notice that the Lie algebra $\mathfrak{r}'_2$ admits both complex symplectic and pseudo-Kähler structures (see Table~\ref{table:Structures_Lie_algebras_I}), but no hypersymplectic metric. This is perhaps not surprising in view of what we observed above: $\mathfrak{r}'_2$ has no complex structure which is at the same time symmetric and skew-symmetric with respect to certain symplectic forms.
	
The next results subsumes what we have found so far.
	
\begin{theorem}\label{theo:4-dim-CS-HS}
Let $(\frg,J)$ be a 4-dimensional solvable Lie algebra with a complex structure. Then $(\frg,J)$ admits hypersymplectic structure if and only if $(\frg,J)$ admits a pseudo-Kähler and a complex symplectic structure.
\end{theorem}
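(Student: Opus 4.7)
The plan is to prove the two directions separately, the first being essentially immediate and the second being a case-by-case verification against the classifications already assembled.

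For the forward implication, if $(J, E, g)$ is a hypersymplectic structure on $\frg$, the discussion preceding Definition~2.3 shows that the $2$-form $\omega_3 = g \circ J \circ E$ is symplectic and $J$ is symmetric with respect to $\omega_3$, while $\omega_1 = g \circ J$ is symplectic and $J$ is skew-symmetric with respect to $\omega_1$. Hence $(J, \omega_3)$ is a complex symplectic structure and $(J, \omega_1)$ is a pseudo-K\"ahler structure, both built on the \emph{same} complex structure $J$.

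For the converse, I would run a case-by-case argument combining Theorem~\ref{theo:complex-symplectic_classification} (which lists all pairs $(\frg, J)$ with $J$ symmetric w.r.t.~some symplectic form) with Ovando's classification \cite{Ovando2006PK} of pseudo-K\"ahler $4$-dimensional Lie algebras (recorded in Table~\ref{table:Structures_Lie_algebras_I}). First, from Table~\ref{table:Structures_Lie_algebras_I}, the candidate underlying Lie algebras are $\RR^4$, $\mathfrak{rh}_3$, $\mathfrak{r}'_2$, $\mathfrak{r}_{4,-1,-1}$, and $\mathfrak{d}_{4,2}$. Then I would go through each and identify the complex structures on each one which are simultaneously symmetric with respect to some symplectic form and skew-symmetric with respect to some (possibly different) symplectic form. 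For $\RR^4$, $\mathfrak{rh}_3$, and $\mathfrak{r}_{4,-1,-1}$ the complex structure is essentially unique up to equivalence, so both conditions are automatically met by the same $J$. For $\mathfrak{d}_{4,2}$, Table~\ref{table:Complex-symplectic structures} singles out $J_2$, and one checks from Ovando's classification that $J_2$ is also skew-symmetric with respect to some symplectic form. The critical case is $\mathfrak{r}'_2$: as remarked in Section~\ref{sec:4DSYM}, only $J_i$ is symmetric with respect to some symplectic form, while the complex structures that are skew-symmetric with respect to some symplectic form are $J$ and $J_{-i}$; these are pairwise inequivalent, so no single $J$ on $\mathfrak{r}'_2$ realizes both conditions simultaneously.

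The above identifies the admissible pairs $(\frg, J)$ as exactly the $(\RR^4, J), (\mathfrak{rh}_3, J), (\mathfrak{r}_{4,-1,-1}, J)$, and $(\mathfrak{d}_{4,2}, J_2)$ data. By Theorem~\ref{theo:hypersymplectic_classification} (Andrada's classification), these are precisely the $4$-dimensional solvable Lie algebras that admit a hypersymplectic structure, and in fact the explicit hypersymplectic triples constructed in \cite{Andrada2006} on each of them use exactly these complex structures. Combining these two ingredients yields the existence of a hypersymplectic structure on $(\frg, J)$, completing the converse.

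The main obstacle, and the step requiring the most care, is the $\mathfrak{r}'_2$ case: one must argue with the parameter $\xi$ for the family $J_\xi$ to ensure that no complex structure other than $J_i$ (up to equivalence) carries a symmetric symplectic form, and that $J_i$ is not equivalent to any of the pseudo-K\"ahler complex structures $J, J_{-i}$. This is where the explicit computation in the proof of Theorem~\ref{theo:complex-symplectic_classification} (which pinned down $\xi = i$ from the symmetry condition) is essential, and it is precisely what rules out the only potential counterexample to the equivalence.
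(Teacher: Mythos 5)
Your proposal is correct and matches the paper's (implicit) argument: the paper states this theorem as a summary of the preceding classification results, with the forward direction coming from the observation after the definition of a hypersymplectic structure that $(J,\omega_3)$ is complex symplectic and $(J,\omega_1)$ is pseudo-Kähler, and the converse coming from crossing Theorem~\ref{theo:complex-symplectic_classification} with Ovando's pseudo-Kähler classification and Andrada's Theorem~\ref{theo:hypersymplectic_classification}, with $\mathfrak{r}'_2$ being exactly the case eliminated because no single complex structure on it is both symmetric and skew-symmetric. Your handling of $\mathfrak{d}_{4,2}$ is also sound, since the forward direction forces the complex structure of any hypersymplectic structure there to be (equivalent to) $J_2$.
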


In other words, if we fix a complex structure $J$ on a 4-dimensional solvable Lie algebra~$\frg$, then~$J$ comes from a hypersymplectic structure if and only if~$J$ is at the same time symmetric and skew-symmetric with respect to certain symplectic forms.

It is natural to ask whether this result holds in higher dimension. For instance, Guan proved that if $\frg$ is a complex 4-dimensional solvable Lie algebra, then the existence of a symplectic form with respect to which the natural complex structure is skew-symmetric is necessary and sufficient for the existence of a hypersymplectic metric \cite{Guan11}. We will come back to this topic in Section \ref{subsec:examples}.
	
\subsection{Constructing hypersymplectic Lie algebras}

As we noticed in Remark \ref{rem:recursion}, hypersymplectic structures can be described as triples of symplectic forms whose recursion operators are of a prescribed type, see \cite[Section 4]{Bande-Kotschick}. In order to obtain explicit examples of hypersymplectic structures, and sticking with the interplay between symmetric and skew-symmetric complex structures, we propose here a different, although related, approach to hypersymplectic structures. This approach emphasizes the role played by the complex structure $J$, to which we attach a pair of symplectic structures $(\omega_{pK},\omega_{cs})$ such that $J$ is skew-symmetric with respect to $\omega_{pK}$ and symmetric with respect to $\omega_{cs}$.
	
Let $\frg$ be a Lie algebra endowed with a complex structure $J$ and two symplectic forms $\omega_{cs}$ and $\omega_{pK}$ with respect to which $J$ is, respectively, symmetric and skew-symmetric. Mindful of Lemma \ref{prop:E_hypersymplectic} and of \eqref{eq:bilinear_forms}, we define $E\colon\frg\to\frg$ and a scalar product $g$ on $\frg$ by 
\[
E=\omega_{pK}^{-1}\circ\omega_{cs} \quad \mathrm{and} \quad g(X,Y)=\omega_{pK}(X,JY)\,.
\]
Clearly, the first equation is equivalent to $\omega_{pK}(EX)=\omega_{cs}(X)$. In order for $(J,E,g)$ to be a hypersymplectic structure on $\frg$ we need to check that $(J,E)$ is a complex product structure and that $g$ is compatible with $(J,E)$. 

The first observation is that, independently from the fact that~$E^2$ may or not equal~$\mathrm{Id}$, by its own definition~$E$ is never~$\pm\mathrm{Id}$. Indeed, this would imply $\omega_{cs}=\pm\omega_{pK}$, which is impossible, since~$J$ is symmetric with respect to~$\omega_{cs}$ and skew-symmetric with respect to~$\omega_{pK}$.

We next show that $E$ always anti-commutes with $J$. 

\begin{lemma}\label{lem:skewEJ}
If $E\colon\frg\to\frg$ is defined by $E=\omega_{pK}^{-1}\circ\omega_{cs}$, then $E\circ J=-J\circ E$.
\end{lemma}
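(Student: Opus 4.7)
The plan is to reinterpret both the (skew-)symmetry conditions and the definition of $E$ in terms of the ``musical'' isomorphisms $\omega_{pK}, \omega_{cs} \colon \frg \to \frg^*$ (each sending $X$ to $\omega(X,\cdot)$), and then to exploit how $J$ intertwines with these maps. Recall that $J$ acts naturally on $\frg^*$ via $(J\alpha)(Y) = \alpha(JY)$, so the calculation takes place in the categories of endomorphisms of $\frg$ and of $\frg^*$ simultaneously; the only real bookkeeping issue is tracking, at each stage, whether the symbol ``$J$'' refers to $J\colon\frg\to\frg$ or to its induced action on the dual.

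With that convention, the symmetry of $J$ with respect to $\omega_{cs}$ reads, for all $X,Y \in \frg$,
\[
(\omega_{cs}\circ J)(X)(Y) = \omega_{cs}(JX,Y) = \omega_{cs}(X,JY) = (J\circ \omega_{cs})(X)(Y),
\]
which I would summarize as the commutation relation $\omega_{cs}\circ J = J\circ \omega_{cs}$ between maps $\frg\to\frg^*$. Analogously, skew-symmetry of $J$ with respect to $\omega_{pK}$ gives $\omega_{pK}\circ J = -J\circ \omega_{pK}$. Composing the latter identity with $\omega_{pK}^{-1}$ on both sides yields $\omega_{pK}^{-1}\circ J = -J\circ \omega_{pK}^{-1}$ as maps $\frg^*\to\frg$.

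The rest is pure algebra. Starting from $E = \omega_{pK}^{-1}\circ\omega_{cs}$, I would compute
\[
E \circ J \;=\; \omega_{pK}^{-1} \circ \omega_{cs} \circ J \;=\; \omega_{pK}^{-1} \circ J \circ \omega_{cs} \;=\; -\,J \circ \omega_{pK}^{-1} \circ \omega_{cs} \;=\; -\,J \circ E,
\]
where the middle equality uses the symmetry of $J$ w.r.t.\ $\omega_{cs}$ and the third equality uses the dualized skew-symmetry with respect to $\omega_{pK}$. This is precisely the desired anti-commutation.

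I do not anticipate any genuine obstacle: the identity is essentially a formal manipulation, and nothing beyond the two compatibility hypotheses is required (no closedness of the forms, no integrability of $J$, no hypothesis that $E^2 = \mathrm{Id}$). The argument is therefore more general than the hypersymplectic setting, which matches the spirit of the construction being developed in Section \ref{section:hypersymplectic}: anti-commutation of $J$ and $E$ is automatic, and only the remaining conditions ($E^2 = \mathrm{Id}$ and compatibility of the induced metric $g$) need to be imposed in subsequent statements.
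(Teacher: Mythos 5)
Your proof is correct and is essentially identical to the paper's: the paper likewise establishes $\omega_{cs}\circ J=J^*\circ\omega_{cs}$ and $\omega_{pK}\circ J=-J^*\circ\omega_{pK}$ (writing $J^*$ for the dual action you denote by $J$ on $\frg^*$) and then performs the same four-step composition. Your remark that only the two compatibility hypotheses are needed is also consistent with how the lemma is used in Section~\ref{section:hypersymplectic}.
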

\begin{proof}
Since $J$ is symmetric with respect to $\omega_{cs}$, we first notice that, for $X,Y\in\frg$,
\[
\omega_{cs}(JX)(Y)=\omega_{cs}(JX,Y)=\omega_{cs}(X,JY)=\omega_{cs}(X)(JY)=(J^*\circ\omega_{cs}(X))(Y)\,,
\]
hence $\omega_{cs}\circ J=J^*\circ\omega_{cs}$, where $J^*\colon\frg^*\to\frg^*$ is the dual map to $J$. Similarly, $\omega_{pK}\circ J=-J^*\circ\omega_{pK}$, and $J\circ\omega_{pK}^{-1}=-\omega_{pK}^{-1}\circ J^*$. Therefore
\[
E\circ J=\omega_{pK}^{-1}\circ\omega_{cs}\circ J=\omega_{pK}^{-1}\circ J^*\circ\omega_{cs}=-J\circ \omega_{pK}^{-1}\circ\omega_{cs}=-J\circ E\,.
\]
\end{proof}

Now, we prove that $E$ being an almost product structure is equivalent to it being compatible with $g$, that is, to the second relation in \eqref{eq:compatible_metric}.

\begin{lemma}\label{lemma:gE_compatible}
If $E=\omega_{pK}^{-1}\circ\omega_{cs}$, then
\begin{enumerate}[label=\arabic*.]
\item $g(EX,Y)=-g(X,EY)$;
\item $g(EX,EY)=-g(X,Y)$ if and only if $E$ is an almost product structure.
\end{enumerate}
\end{lemma}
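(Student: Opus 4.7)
The plan is to reduce both identities to manipulations involving only the defining relation $\omega_{pK}\circ E=\omega_{cs}$, the skew-symmetries of $\omega_{pK}$ and $\omega_{cs}$, the $J$-symmetry of $\omega_{cs}$ (resp.\ $J$-skew-symmetry of $\omega_{pK}$), and the anti-commutation $JE=-EJ$ already established in Lemma \ref{lem:skewEJ}.

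For part (1), I would rewrite the left-hand side using $g(X,Y)=\omega_{pK}(X,JY)$ and $\omega_{pK}\circ E=\omega_{cs}$: this gives
\[
g(EX,Y)=\omega_{pK}(EX,JY)=\omega_{cs}(X,JY).
\]
Then I would rewrite the right-hand side by pushing $E$ past $J$ via Lemma \ref{lem:skewEJ}:
\[
g(X,EY)=\omega_{pK}(X,JEY)=-\omega_{pK}(X,EJY),
\]
flip the arguments by skew-symmetry of $\omega_{pK}$, use $\omega_{pK}\circ E=\omega_{cs}$ again, and finally use skew-symmetry of $\omega_{cs}$ to land on $-\omega_{cs}(X,JY)$. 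Comparing both sides gives $g(EX,Y)=-g(X,EY)$.

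For part (2), I would apply part (1) twice (or once, depending on how one writes it) to obtain
\[
g(EX,EY)=-g(X,E^2Y),
\]
so that $g(EX,EY)=-g(X,Y)$ for all $X,Y\in\frg$ is equivalent to $g\bigl(X,(E^2-\Id)Y\bigr)=0$ for all $X,Y$. Since $g$ is non-degenerate (it is a pseudo-Hermitian metric built from $\omega_{pK}$ and $J$, and both are non-degenerate), this forces $E^2=\Id$. Conversely, $E^2=\Id$ obviously gives the compatibility. The paragraph immediately preceding the lemma already observes that $E\neq\pm\Id$, so $E^2=\Id$ is precisely the almost product condition.

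I do not anticipate a serious obstacle here: the whole argument is dual-pairing bookkeeping. The only point where one must be a bit careful is to treat the various $\omega$'s consistently either as $(0,2)$-tensors or as maps $\frg\to\frg^*$ (as the paper does); a clean way to avoid confusion is to stay in the $(0,2)$-tensor formulation throughout and only invoke $\omega_{pK}\circ E=\omega_{cs}$ in the evaluated form $\omega_{pK}(EX,Y)=\omega_{cs}(X,Y)$.
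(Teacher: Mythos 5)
Your proof is correct and follows essentially the same route as the paper's: both parts reduce to the evaluated identity $\omega_{pK}(EX,\cdot)=\omega_{cs}(X,\cdot)$ together with the skew-symmetry of the two forms and Lemma \ref{lem:skewEJ}, and part (2) follows from part (1) plus the non-degeneracy of $g$. Your explicit appeal to non-degeneracy when passing from $g\bigl(X,(E^2-\Id)Y\bigr)=0$ to $E^2=\Id$ is a point the paper leaves implicit, but the argument is the same.
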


\begin{proof}
For the first point, we have, for $X,Y\in\frg$,
\begin{align*}
g(EX,Y) &=\omega_{pK}(EX,JY)=\omega_{pK}\left(\left(\omega_{pK}^{-1}\circ\omega_{cs}\right)(X)\right)(JY)=\omega_{cs}(X)(JY)\\
&=\omega_{cs}(X,JY)
\end{align*}
and
\begin{align*}
g(X,EY) &=\omega_{pK}(X,JEY)=\omega_{pK}(EJY,X)=\omega_{pK}\left(\left(\omega_{pK}^{-1}\circ\omega_{cs}\right)(JY)\right)(X)\\
&=\omega_{cs}(JY)(X)=\omega_{cs}(JY,X)=-\omega_{cs}(X,JY)\,,
\end{align*}
where we use Lemma \ref{lem:skewEJ} and the fact that $\omega_{pK}\in\Lambda^2\frg^*$ in the second equality. Therefore, we see that, for every $X,Y\in\frg$,
\[
g(EX,EY)=-g(X,E^2Y)\,,
\]
and the latter equals $-g(X,Y)$ if and only if $E^2=\mathrm{Id}$.
\end{proof}

\begin{lemma}\label{lemma:E_integrable}
Suppose $E=\omega_{pK}^{-1}\circ\omega_{cs}$ is an almost product structure. Then $E$ is a product structure.
\end{lemma}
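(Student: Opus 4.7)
The plan is to translate the integrability of $E$ into the statement that the $\pm 1$-eigenspaces $\frg_\pm$ of $E$ are Lie subalgebras of $\frg$, and then to deduce this from the closedness of both $\omega_{pK}$ and $\omega_{cs}$ combined with the symmetry of $E$ with respect to $\omega_{pK}$. Taken together with Lemma~\ref{lem:skewEJ}, this will complete the verification that $(J,E)$ is a complex product structure.

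First I would observe that $E$ is automatically $\omega_{pK}$-symmetric, irrespective of any assumption on $E^{2}$, by the one-line computation
\[
\omega_{pK}(EX,Y)=\omega_{cs}(X,Y)=-\omega_{cs}(Y,X)=-\omega_{pK}(EY,X)=\omega_{pK}(X,EY)\,,
\]
which uses only antisymmetry of both $2$-forms and the defining relation $E=\omega_{pK}^{-1}\circ\omega_{cs}$. Combined with the standing hypothesis $E^{2}=\mathrm{Id}$, this yields $\omega_{pK}$-orthogonality of the two eigenspaces: for $X\in\frg_+$ and $Y\in\frg_-$ one computes $\omega_{pK}(X,Y)=\omega_{pK}(EX,Y)=\omega_{pK}(X,EY)=-\omega_{pK}(X,Y)$. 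Since $\omega_{pK}$ is non-degenerate on $\frg$, its restriction to each $\frg_\pm$ is non-degenerate as well, and $\frg_\pm^{\perp_{\omega_{pK}}}=\frg_\mp$.

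Next I would exploit both closedness conditions simultaneously. Writing $d\omega_{pK}=0$ as the usual cyclic identity, doing the same with $d\omega_{cs}=0$ and replacing $\omega_{cs}(V,W)$ by $\omega_{pK}(V,EW)$ (using the symmetry just established), and then adding and subtracting the two identities, I arrive at
\[
\omega_{pK}\bigl([X,Y],(\mathrm{Id}\pm E)Z\bigr)+\omega_{pK}\bigl([Y,Z],(\mathrm{Id}\pm E)X\bigr)+\omega_{pK}\bigl([Z,X],(\mathrm{Id}\pm E)Y\bigr)=0
\]
for all $X,Y,Z\in\frg$. Specialising the $-$ identity to $X,Y\in\frg_+$ and an arbitrary $Z\in\frg_-$ kills the last two terms while $(\mathrm{Id}-E)Z=2Z$, leaving $\omega_{pK}([X,Y],Z)=0$; the orthogonality established above then forces $[X,Y]\in\frg_+$. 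Symmetrically, the $+$ identity applied with $X,Y\in\frg_-$ and $Z\in\frg_+$ gives $[X,Y]\in\frg_-$. Hence both $\frg_+$ and $\frg_-$ are Lie subalgebras of $\frg$, so $N_E\equiv 0$ and $E$ is a product structure.

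The only place where the argument really has teeth, and the step I expect to carry the most weight, is the $\omega_{pK}$-orthogonality of $\frg_+$ and $\frg_-$: it is exactly what turns ``$\omega_{pK}([X,Y],-)$ annihilates an entire eigenspace'' into ``$[X,Y]$ lies in the opposite eigenspace''. Once that observation is in hand, the rest is a clean manipulation of the two Chevalley--Eilenberg cyclic identities; no further structural input beyond the hypotheses of the lemma and the previous auxiliary results is needed.
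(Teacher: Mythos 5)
Your proof is correct and is essentially the paper's argument in slightly different clothing: your combined cyclic identities are exactly the statements $d(\omega_{pK}\pm\omega_{cs})=0$, and the paper closes the argument by observing directly that $\frg_{\pm}=\ker(\omega_{pK}\mp\omega_{cs})$ (the kernel of a closed $2$-form being a subalgebra), whereas you reach the same conclusion via the $\omega_{pK}$-orthogonality of the eigenspaces. Both routes rest on the same key input, so no further comment is needed.
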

\begin{proof}
This is basically a reproduction of Lemma 2 in \cite{Bande-Kotschick}. Since $E$ is an almost product structure, then $\frg=\frg_+\oplus\frg_-$. Consider the 2-forms $\omega_{\pm}=\omega_{pK}\pm\omega_{cs}$. We claim that
\[
\frg_{\pm}=\ker(\omega_{\mp})\coloneqq\{X\in\frg \mid \omega_{\mp}(X)=0\}\,.
\]
Indeed,
\[
\omega_\mp(X)=(\omega_{pK}\mp\omega_{cs})(X)=\omega_{pK}(X)\mp\omega_{pK}(EX)=\omega_{pK}(X\mp EX)\,;
\]
since $\omega_{pK}$ is non-degenerate, the latter is zero if and only if $X\in\frg_{\pm}$. Now, $E$ is integrable if and only if $\frg_{\pm}$ are subalgebras of $\frg$. Since $\omega_\mp$ is closed, given $X,Y\in\frg_\pm$, one gets $[X,Y]\in\frg_\pm$, hence $E$ is integrable. 
\end{proof}

We are now ready to state our construction of a hypersymplectic structure on a Lie algebra $\frg$ starting with a complex structure $J$ and two symplectic forms $\omega_{pK}$ and $\omega_{cs}$ with respect to which $J$ is, respectively, skew-symmetric and symmetric.
	
\begin{theorem}\label{theo:main}
Let $(\frg,J)$ be a Lie algebra endowed with a complex structure and let $\omega_{pK}$ and $\omega_{cs}$ be symplectic forms with respect to which $J$ is, respectively, skew-symmetric and symmetric. Define $E\colon\frg\to\frg$ by $E=\omega_{pK}^{-1}\circ\omega_{cs}$ and let $g=-\omega_{pK}\circ J$ be the pseudo-Kähler metric. Then $(J,E,g)$ is a hypersymplectic structure if and only if $E$ is an almost product structure.
\end{theorem}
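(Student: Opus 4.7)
My plan is to verify the forward direction (which is trivial by definition) and then check, one by one, each clause in the definition of hypersymplectic structure for the reverse direction. Most of the work has already been packaged in the preceding lemmas; the only substantive computation I expect is the identification of the associated $2$-forms with the given ones.

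For the forward direction, a hypersymplectic structure requires $E$ to be a product structure, hence in particular an almost product structure. For the converse, I would assume $E^2=\Id$ and first establish that $(J,E)$ is a complex product structure: the anticommutation $J\circ E=-E\circ J$ is exactly Lemma \ref{lem:skewEJ}, the inequality $E\neq\pm\Id$ was observed just before that lemma (since otherwise $\omega_{cs}=\pm\omega_{pK}$, contradicting the opposite symmetry of $J$), and integrability of $E$ is Lemma \ref{lemma:E_integrable}, which uses only $E^2=\Id$. Next I would verify that $g$ is a compatible metric. Using the defining relation $g(X,Y)=\omega_{pK}(X,JY)$ together with skew-symmetry of $J$ with respect to $\omega_{pK}$, symmetry and non-degeneracy of $g$ are a short calculation, and the same ingredients give $g(JX,JY)=g(X,Y)$. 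The remaining compatibility $g(EX,EY)=-g(X,Y)$ is precisely the second part of Lemma \ref{lemma:gE_compatible}, and is where the hypothesis $E^2=\Id$ enters again.

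It remains to show that the $2$-forms $\omega_1,\omega_2,\omega_3$ from \eqref{eq:bilinear_forms} are symplectic. Non-degeneracy is automatic from that of $g$, $J$ and $E$. For closedness — the only genuinely new step — I would observe that $\omega_1$ and $\omega_3$ coincide, on the nose, with the given forms $\omega_{pK}$ and $\omega_{cs}$: using $J^2=-\Id$ and the skew-symmetry of $J$ with respect to $\omega_{pK}$ one computes
\[
\omega_1(X,Y)=g(JX,Y)=\omega_{pK}(JX,JY)=\omega_{pK}(X,Y),
\]
while the defining identity $\omega_{pK}(EX)=\omega_{cs}(X)$ gives $\omega_3(X,Y)=g(JEX,Y)=\omega_{pK}(EX,Y)=\omega_{cs}(X,Y)$. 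Since $\omega_{pK}$ and $\omega_{cs}$ are closed by hypothesis, so are $\omega_1$ and $\omega_3$, and Proposition \ref{prop:all_closed} then supplies closedness of $\omega_2$. The main obstacle, as far as I can see, is simply careful bookkeeping of conventions and signs, since the various structures interact through several sign-dependent identities; no new idea appears to be required beyond what the earlier lemmas already provide.
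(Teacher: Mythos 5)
Your proposal is correct and follows essentially the same route as the paper's proof: both directions are handled by invoking Lemmas \ref{lem:skewEJ}, \ref{lemma:gE_compatible} and \ref{lemma:E_integrable}, and then observing that the forms $\omega_1=\omega_{pK}$ and $\omega_3=\omega_{cs}$ are closed by hypothesis. Your explicit verification that $\omega_1$ and $\omega_3$ coincide with $\omega_{pK}$ and $\omega_{cs}$ (and the appeal to Proposition \ref{prop:all_closed} for $\omega_2$) merely spells out what the paper leaves implicit in its final sentence.
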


\begin{proof}
One direction is clear. For the other one, since $E$ is an almost product structure, then it is a product structure by Lemma \ref{lemma:E_integrable}. By Lemma \ref{lem:skewEJ} $(J,E)$ is a complex product structure. By Lemma \ref{lemma:gE_compatible}, the metric $g$ is compatible with $(J,E)$. Since $\omega_{pK}$ and $\omega_{cs}$ are closed by assumption, the metric $g$ is hypersymplectic and $(J,E,g)$ is a hypersymplectic structure.
\end{proof}

\begin{corollary}
If $E\colon\frg\to\frg$ defined above is an almost product structure, then $\omega_{cs}$ is parallel with respect to the Levi-Civita connection of the pseudo-Kähler metric.
\end{corollary}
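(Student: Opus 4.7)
The plan is to deduce $\nabla\omega_{cs}=0$ from the parallelism of $g$, $J$, and $E$, using the hypersymplectic structure produced by Theorem~\ref{theo:main}. Throughout, $\nabla$ denotes the Levi-Civita connection of the pseudo-Kähler metric $g=-\omega_{pK}\circ J$.

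First, I would invoke Theorem~\ref{theo:main} to conclude that $(J,E,g)$ is a bona fide hypersymplectic structure on $\frg$. In this structure, $\omega_{pK}$ coincides with $\omega_1$ and $\omega_{cs}$ coincides with $\omega_3$ in the sense of \eqref{eq:bilinear_forms}: indeed $\omega_{pK}(X,Y)=g(JX,Y)$ directly from the definition of $g$, and $\omega_{cs}=\omega_{pK}\circ E$ from the very formula $E=\omega_{pK}^{-1}\circ\omega_{cs}$. In particular $\omega_{cs}(X,Y)=g(JEX,Y)$, so in operator form $\omega_{cs}=g\circ (J\circ E)$.

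Next, I would establish that $\nabla J=0$ and $\nabla E=0$. The first is the standard pseudo-Kähler identity: $J$ is integrable, $g$ is Hermitian with respect to $J$, and the fundamental form $\omega_{pK}$ is closed by hypothesis; the usual Kähler computation for the Levi-Civita connection (expressing $2g(\nabla_X J\cdot Y,Z)$ in terms of $d\omega_{pK}$ and the Nijenhuis tensor $N_J$) then yields $\nabla J=0$. For the second, I would apply the analogous para-Kähler argument: by Lemma~\ref{lemma:E_integrable} the operator $E$ is an integrable product structure, by Lemma~\ref{lemma:gE_compatible} the metric satisfies $g(E\cdot,E\cdot)=-g(\cdot,\cdot)$, and by Proposition~\ref{prop:all_closed} the 2-form $\omega_2=g\circ E$ is closed; the same Koszul-type formula, now with the product-structure signs, gives $\nabla E=0$.

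Finally, combining $\nabla g=0$ with $\nabla J=0$ and $\nabla E=0$, and writing $\omega_{cs}=g\circ(J\circ E)$, the Leibniz rule gives $\nabla\omega_{cs}=0$, which is the claim.

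The main obstacle is the parallelism $\nabla E=0$: the pseudo-Kähler case is classical, but the para-Kähler version requires verifying that the standard Koszul-type derivation transcribes to the signature-changing identity $g(EX,Y)=-g(X,EY)$. Once one checks, however, that the only ingredients used in the Kähler proof are (i) closedness of the fundamental form, (ii) vanishing of the Nijenhuis tensor, and (iii) the (anti)symmetry relation of the endomorphism with respect to $g$, the argument carries over verbatim, with the sign changes cancelling as required.
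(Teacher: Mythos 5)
Your proof is correct, and it reaches the same final step as the paper (parallelism of $g$, $J$, $E$ forces $\nabla\omega_{cs}=\nabla\bigl(g\circ(J\circ E)\bigr)=0$), but it gets there by a more self-contained route. The paper simply invokes Theorem~\ref{theo:main} and then cites the known holonomy reduction $\Hol(g)\subset\Sp(2n,\RR)$ for hypersymplectic metrics, from which the parallelism of $E$ and $\omega_{pK}$ is read off as a black box. You instead prove $\nabla J=0$ and $\nabla E=0$ directly: the first by the standard pseudo-Kähler Koszul identity (valid in any signature since only non-degeneracy of $g$ is used), the second by its para-Kähler analogue, correctly noting that the required closedness of $\omega_2=g\circ E$ is supplied by Proposition~\ref{prop:all_closed} and the integrability of $E$ by Lemma~\ref{lemma:E_integrable}. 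The para-Kähler identity you rely on is indeed classical and the sign bookkeeping works out as you claim (the anti-compatibility $g(EX,Y)=-g(X,EY)$ from Lemma~\ref{lemma:gE_compatible} plays the role of skew-adjointness of $J$). What your version buys is independence from the holonomy statement --- in effect you are reproving the Lie-algebra case of that reduction --- at the cost of length; the paper's version is a two-line deduction but leans on an external fact. Both are valid.
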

\begin{proof}
By Theorem \ref{theo:main} the metric $g$ is hypersymplectic, hence its holonomy reduces to $\mathrm{Sp}(2n,\RR)$, where $4n=\dim\frg$. Then $E$ is parallel with respect to the Levi-Civita connection of $g$, and so is $\omega_{pK}$, hence the same holds for $\omega_{cs}$.
\end{proof}

\begin{remark}
The above theorem can be rephrased in the language of $G$-structures, for $G=\mathrm{Sp}(2n,\RR)$: as soon as the triple $(J,\omega_{pK},\omega_{cs})$ defines an $\mathrm{Sp}(2n,\RR)$-structure, i.e.~as soon as $E$ is an almost product structure, the structure is automatically torsion-free.
\end{remark}

\begin{remark}
Using this result and the classification of complex symplectic and pseudo-Kähler structure on 4-dimensional Lie algebras, one could obtain a different proof of Theorem \ref{theo:hypersymplectic_classification}.
\end{remark}

\subsection{Examples}\label{subsec:examples} We apply Theorem \ref{theo:main} to study the interplay between two symplectic structures with respect to which a fixed complex structure is, respectively, symmetric and skew-symmetric.

\begin{example}\label{ex:1}
Consider the 6-dimensional nilpotent Lie algebra $\frh_4=(0^4,12,14+23)$ and set $\frg=\frh_4\oplus\RR^2$. The following complex structure on $\frg$ has been adapted from \cite[Proposition 5.2.12]{Latorre2016}:
\[
J(e_1)=e_2\,, \quad J(e_3)=-e_4\,, \quad J(e_5)=2e_7 \quad \mathrm{and} \quad J(e_6)=-e_8\,.
\]
Then $J$ is symmetric with respect to
\begin{align*}
\omega_{cs}&=a_{13}\left(e^{13}+e^{24}\right)+a_{14}\left(e^{14}-e^{23}\right)+a_{15}\left(2e^{15}-e^{27}\right)+a_{17}\left(e^{17}+2e^{25}\right)\\
		&+a_{16}\left(2e^{16}+2e^{28}-2e^{35}-e^{47}\right)+a_{18}\left(2e^{18}-2e^{26}+e^{37}-2e^{45}\right)\,,
\end{align*}
subject to $a_{16}^2+a_{18}^2\neq 0$, and skew-symmetric with respect to
\begin{align*}
\omega_{pK}&=b_{12}e^{12}+b_{13}\left(e^{13}-e^{24}\right)+b_{14}\left(e^{14}+e^{23}\right)+b_{15}\left(2e^{15}+e^{27}\right)+b_{17}(e^{17}-2e^{25})\\
		&+b_{16}\left(2e^{16}-2e^{28}-2e^{35}+e^{47}\right)+b_{18}\left(2e^{18}+2e^{26}+e^{37}+2e^{45}\right)+b_{34}e^{34}\,,
\end{align*}
subject to $b_{16}^2+b_{18}^2\neq 0$. Let us consider the endomorphism $E\colon\frg\to\frg$ defined by $E=\omega_{pK}^{-1}\circ\omega_{cs}$. By choosing $a_{ij}=b_{ij}$ for every $i,j$, as well as $b_{14}=0$, one computes that $E^2=\mathrm{Id}$. Hence, for every such choice of the parameters, $(J,E,g)$ is a hypersymplectic structure by Theorem \ref{theo:main}.

$(J,E,g)$ gives a left-invariant hypersymplectic structure on $G$, the only connected, simply connected Lie group $G$ with Lie algebra $\frg$. By Malcev theorem \cite{Maltsev1949} $G$ admits a lattice $\Gamma$, and hence the nilmanifold $\Gamma\backslash G$ is hypersymplectic. Since $\frg$ is 2-step nilpotent and has 4-dimensional center, $(\Gamma\backslash G,J)$ is an example of a Kodaira manifold. Hypersymplectic metrics on Kodaira manifolds have been investigated in \cite{FPPS}. In fact, it can be shown that the pair $(\frg,J)$ coincides with the corresponding pair of Example 4 in \cite{FPPS}.
\end{example}

\begin{example}\label{ex:2}
We provide an example of a nilmanifold endowed with a left-invariant complex structure $J$ such that, for every possible choice of a left-invariant complex symplectic structure $\omega_{cs}$ and of a left-invariant pseudo-Kähler structure $\omega_{pK}$, $E=\omega_{pK}^{-1}\circ\omega_{cs}$ is not an almost product structure.

On the irreducible 8-dimensional, 4-step nilpotent Lie algebra $\frg=(0^3,12,13+24,14-23,15+26,16+7\cdot 25+8\cdot 34)$ we consider the complex structure given by
\[
J(e_1)=e_2\,, \quad J(e_3)=-3e_4\,, \quad J(e_5)=-e_6 \quad \mathrm{and} \quad J(e_7)=3e_8\,.
\]
The most general 2-form with respect to which $J$ is symmetric is
\begin{align*}
\omega_{cs}&=a_{13}\left(3e^{13}+e^{24}\right)+a_{14}\left(e^{14}-3e^{23}\right)+a_{15}\left(e^{15}+e^{26}\right)+a_{16}\left(e^{16}-e^{25}\right)\\
		&+a_{17}\left(3e^{17}-e^{28}-12e^{35}+4e^{46}\right)+a_{18}\left(e^{18}+3e^{27}+12e^{36}+4e^{45}\right)\,,
\end{align*}
subject to $a_{17}^2+a_{18}^2\neq 0$, and the most general 2-form with respect to which $J$ is skew-symmetric is
\begin{align*}
\omega_{pK}&=b_{12}e^{12}+b_{13}\left(3e^{13}-e^{24}\right)+b_{14}\left(e^{14}+3e^{23}\right)+b_{16}(e^{16}+e^{25}+2e^{34})\\
		&+b_{17}\left(3e^{17}+e^{28}+6e^{35}+2e^{46}\right)\,,
\end{align*}
subject to $b_{17}\neq 0$. Consider the endomorphism $E\colon\frg\to\frg$ defined by $E=\omega_{pK}^{-1}\circ\omega_{cs}$. One checks that the entries $(1,1)$ and $(3,3)$ of $E^2$ are $\frac{a_{17}^2+a_{18}^2}{b_{17}^2}$ and $\frac{4(a_{17}^2+a_{18}^2)}{b_{17}^2}$. Imposing that they are equal to 1, we obtain $a_{17}=a_{18}=0$, which contradicts the fact that the generic complex symplectic form is non degenerate. Thus $E$ is never an almost product structure, and, consequently, $(\frg,J)$ does not admit {\em any} hypersymplectic metric, by Theorem \ref{theo:main}. This shows that in dimension $4n$ with $n\geq 2$ the analogue of Theorem \ref{theo:4-dim-CS-HS} does not hold. By Malcev theorem \cite{Maltsev1949} $G$ admits a lattice $\Gamma$, and the corresponding nilmanifold $\Gamma\backslash G$ inherits a triple $(J,\omega_{cs},\omega_{pK})$ which never combines to give a left-invariant hypersymplectic structure.
\end{example}


\section{A 4-step nilmanifold with hypersymplectic metrics}\label{sec:examples}

In this section we apply our construction and obtain two 1-parameter families of left-invariant hypersymplectic structures on a compact nilmanifold whose underlying Lie algebra is 4-step nilpotent; to the best of our knowledge, this is the first example with such nilpotency step. The corresponding metrics are complete, one is flat and the other one is non-flat. 

On the irreducible, 8-dimensional 4-step nilpotent Lie algebra
\[
\frh=(0^3,12,13,14+23,15,16+2\cdot 25+34)
\]
we consider the family of complex structures $J_c$ given by
\[
J_c(e_1)=\frac{c+1}{c}e_2\,, \quad J_c(e_3)=-e_4\,, \quad J_c(e_5)=\frac{1}{c}e_6 \quad \mathrm{and} \quad J_c(e_7)=\frac{3+2c}{c}e_8\,,
\]
where $c\in(0,\infty)$ is a parameter. Recall that two complex structures $J_1,J_2$ on a Lie algebra~$\frh$ are {\em equivalent} if there exists a Lie algebra automorphism $\psi\colon\frh\to\frh$ such that $\psi\circ J_1=J_2\circ \psi$.

\begin{proposition}
Two complex structures $J_c$ and $J_{c'}$ on $\frh$ are equivalent if and only if $c=c'$.
\end{proposition}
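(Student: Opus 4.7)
The forward direction ($c=c'$ gives $J_c\sim J_{c'}$ via $\psi=\mathrm{Id}$) is trivial, so the plan is to take an arbitrary automorphism $\psi\colon\frh\to\frh$ satisfying $\psi\circ J_c=J_{c'}\circ\psi$, exploit Lie-theoretic invariants of $\frh$ to put $\psi$ in block-triangular form, and then extract a single scalar identity that forces $c=c'$. A direct calculation from the structure equations gives the descending central series and center
\[
\frh^1=\langle e_4,\ldots,e_8\rangle,\quad \frh^2=\langle e_6,e_7,e_8\rangle,\quad \frh^3=\langle e_8\rangle,\quad Z(\frh)=\langle e_7,e_8\rangle,
\]
each of which is $\psi$-invariant. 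Moreover, $J_c(\frh^1)=\langle e_3,e_5,e_6,e_7,e_8\rangle$ is the same subspace for every $c>0$, so the 6-dimensional space $V:=\frh^1+J_c(\frh^1)=\langle e_3,\ldots,e_8\rangle$ is also $\psi$-invariant.

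Write $\psi(e_1)=p\,e_1+q\,e_2+(\text{terms in }V)$ and $\psi(e_2)=r\,e_1+s\,e_2+(\text{terms in }V)$, and let $a_{ij}$ denote the $e_j$-coefficient of $\psi(e_i)$ for $i\in\{3,\ldots,8\}$. The invariances above already make the matrix of $\psi$ block-triangular with respect to $\frh^3\subset Z(\frh)\subset\frh^2\subset\frh^1\subset V\subset\frh$. Applying $\psi\circ J_c=J_{c'}\circ\psi$ generator by generator yields in particular: (i) on $e_4$, the $e_4$-coefficient of $J_{c'}\psi(e_4)\in J_{c'}(\frh^1)$ vanishes, so $a_{34}=0$; (ii) on $e_7$, $\psi(e_7)=a_{77}\,e_7$; (iii) on $e_6$ (using $J_c e_6=-c\,e_5$), the $e_5$- and $e_6$-comparisons give $c\,a_{55}=c'\,a_{66}$ and $a_{56}=0$; (iv) on $e_1$ modulo $\frh^1$, $s=\tfrac{c(c'+1)}{c'(c+1)}\,p$.

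The closing step uses the brackets $[e_1,e_3]=-e_5$ and $[e_2,e_3]=-e_6$. Expanding $[\psi(e_1),\psi(e_3)]=-\psi(e_5)$, the $e_6$-coefficient on the left collapses—thanks to $a_{34}=0$—to $-q\,a_{33}$ (the sole surviving contribution, coming from $[e_2,e_3]=-e_6$); matching with $-a_{56}=0$ forces $q=0$, while comparing $e_5$-coefficients gives $a_{55}=p\,a_{33}$. Analogously, $[\psi(e_2),\psi(e_3)]=-\psi(e_6)$ yields $a_{66}=s\,a_{33}$. Substituting into $c\,a_{55}=c'\,a_{66}$ produces $c\,p=c'\,s$, and combining with $s/p=\tfrac{c(c'+1)}{c'(c+1)}$ from (iv) immediately forces $c+1=c'+1$, hence $c=c'$. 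The required $p\neq 0$ is automatic: with $q=0$, also having $p=0$ would place $\psi(e_1)\in\frh^1$, contradicting invertibility of the induced map on $\frh/\frh^1$.

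The main obstacle is clean bookkeeping through the many entries of $\psi$; the identity $c=c'$ ultimately hinges on the single bracket $[e_2,e_3]=-e_6$, which couples the scalar $s$ to $a_{66}$ and thereby—via $c\,a_{55}=c'\,a_{66}$—to the hidden parameter.
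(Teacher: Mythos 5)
Your proof is correct, and at its core it pins down exactly the same two scalar identities as the paper: your relations $\frac{c+1}{c}\,s=\frac{c'+1}{c'}\,p$ and $c\,p\,a_{33}=c'\,s\,a_{33}$ are precisely the paper's displayed equations $\frac{1+c}{c}\,\lambda^2_2=\frac{1+c'}{c'}\,\lambda^1_1$ and $c\,(\lambda^1_1)^3=c'\,(\lambda^1_1)^2\lambda^2_2$, under the dictionary $p=\lambda^1_1$, $s=\lambda^2_2$, $a_{33}=(\lambda^1_1)^2$. The difference is in how you reach them: the paper first computes the full automorphism group of $\frh$ as an explicit $8\times 8$ matrix and then reads off two coefficients, whereas you extract only the entries you need from the $\psi$-invariance of the characteristic filtration $\frh^3\subset Z(\frh)\subset\frh^2\subset\frh^1\subset V$ together with the two brackets $[e_1,e_3]=-e_5$ and $[e_2,e_3]=-e_6$. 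Your route is leaner and makes it clearer which structural feature of $\frh$ is responsible for the rigidity. Two small points to tighten: you should record that $a_{33}\neq 0$ (it is the scalar by which $\psi$ acts on the line $V/\frh^1$, hence nonzero by invertibility), since you divide by it both when deducing $q=0$ from $q\,a_{33}=0$ and when cancelling it in $c\,p\,a_{33}=c'\,s\,a_{33}$; and in your final sentence $p=q=0$ places $\psi(e_1)$ in $V$ rather than in $\frh^1$, so the contradiction is with invertibility of the induced map on $\frh/V$ (or, on $\frh/\frh^1$, with the fact that $\psi(e_1)$ and $\psi(e_3)$ would then both be multiples of $e_3$ there).
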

\begin{proof}
In terms of the basis $\{e_k\}_{k=1}^8$, one can check that every Lie algebra isomorphism $\psi\colon\frh\to\frh$ is given by the matrix
\[
\psi=\begin{pmatrix}
\lambda^1_1 & 0 & 0 & 0 & 0 & 0 & 0 & 0 \\[4pt] 
\lambda^1_2 & \lambda^2_2  & 0 & 0 & 0 & 0 & 0 & 0 \\[4pt] 
\lambda^1_3 & 0  & (\lambda^1_1)^2 & 0 & 0 & 0 & 0 & 0 \\[4pt] 
\lambda^1_4 & \lambda^2_4 & \lambda^3_4 & \lambda^1_1\lambda^2_2 & 0 & 0 & 0 & 0 \\[4pt] 
\lambda^1_5 & \lambda^2_5 & \frac{\lambda^2_4(\lambda^1_1)^2}{\lambda^2_2} 
	& 0 & (\lambda^1_1)^3 & 0 & 0 & 0 \\[4pt]
\lambda^1_6 & \lambda^2_6 & \lambda^3_6 & \lambda^4_6 
	& \lambda^5_6 
	& (\lambda^1_1)^2\lambda^2_2 & 0 & 0 \\[4pt]
\lambda^1_7 & \lambda^2_7 & \lambda^3_7 & \lambda^1_1\lambda^2_5 
	& \frac{(\lambda^1_1)^3\lambda^2_4}{\lambda^2_2} & 0 & (\lambda^1_1)^4 & 0 \\[4pt]
\lambda^1_8 & \lambda^2_8 & \lambda^3_8 & \lambda^4_8 
	& \lambda^5_8  & (\lambda^1_1)^2\lambda^2_4 
	& \lambda^7_8
	& (\lambda^1_1)^3\lambda^2_2
\end{pmatrix}
\]
where $\lambda^1_1\lambda^2_2\neq 0$ and
\begin{flalign*}
\lambda^4_6 &=\lambda^1_1\lambda^2_4-\lambda^1_3\lambda^2_2\,,
	&
\lambda^4_8 &=\lambda^1_1\lambda^2_6+2\lambda^1_2\lambda^2_5
	-2\lambda^1_5\lambda^2_2+\lambda^1_3\lambda^2_4\,, \ \quad
\lambda^7_8=(\lambda^1_1)^2(3\lambda^1_1\lambda^1_2+\lambda^3_4),\\
\lambda^5_6 &=\lambda^1_1(\lambda^1_1\lambda^2_2+\lambda^3_4)\,,
	&
\lambda^5_8 &=\lambda^1_1\lambda^3_6
+\frac{2\,\lambda^1_2\lambda^2_4\,(\lambda^1_1)^2}{\lambda^2_2}
		+\lambda^1_3\lambda^3_4-\lambda^1_4(\lambda^1_1)^2.
	&
\end{flalign*}
If $J_c$ and $J_{c'}$ are equivalent, then
\begin{align}
\frac{1+c}{c}\,\psi(e_2) = \psi(J_c\,e_1) = J_{c'}(\psi e_1)
	=& -\frac{c'}{1+c'}\,\lambda^1_2\,e_1 + \frac{1+c'}{c'}\,\lambda^1_1\,e_2 + \lambda^1_4\,e_3
	- \lambda^1_3\,e_4 \label{eq1}\\
	&-c'\lambda^1_6\,e_5 +\frac{\lambda^1_5}{c'}\,e_6
	-\frac{c'}{3+2c'}\,\lambda^1_8\,e_7+\frac{3+2c'}{c'}\,\lambda^1_7\,e_8 \notag\\
-c\,\psi(e_5) = \psi(J_c\,e_6) =  J_{c'}(\psi e_6)
	=& -c'\,(\lambda^1_1)^2\,\lambda^2_2\,e_5 -\frac{c'}{3+2c'}\,(\lambda^1_1)^2\lambda^2_4\,e_7\,.
\label{eq2}
\end{align}
Equalling the coefficients of~$e_2$ in~\eqref{eq1} and those of~$e_5$ in~\eqref{eq2}, we obtain the equations:
$$\frac{1+c}{c}\,\lambda^2_2 = \frac{1+c'}{c'}\,\lambda^1_1, 
\qquad
c\,(\lambda^1_1)^3=c'\,(\lambda^1_1)^2\lambda^2_2.$$
If we solve for $\lambda^2_2$ in the second equation and replace it in the first one, we conclude that $c=c'$.
\end{proof}

The complex structure $J_c$ is symmetric with respect to
\begin{align}\label{ex:3.1.1}
\omega_{cs}&=a_{13}\big((c+1)e^{13}+ce^{24}\big)
+a_{14}\big((c+1)e^{14}-ce^{23}\big)\nonumber\\
&+a_{17}\big((c+1)(2c+3)e^{17}-c^2e^{28}+ce^{35}+c^2e^{46}\big)\nonumber\\
&+a_{18}\big((c+1)e^{18}+(2c+3)e^{27}+ce^{36}-e^{45}\big)\,,
\end{align}
subject to the condition $a_{17}^2+a_{18}^2\neq 0$. As for skew-symmetry, we have:
\begin{align}\label{ex:3.2.1}
\omega_{pK}&=b_{12}e^{12}+b_{13}\big((c+1)e^{13}-ce^{24}\big)
+b_{14}\big((c+1)e^{14}+ce^{23}\big)\nonumber\\
&+b_{16}\big((c+1)e^{16}-e^{25}-(c+2)e^{34}\big)\nonumber\\
&+b_{17}\big((c+1)(2c+3)e^{17}+c^2e^{28}+ce^{35}-c^2e^{46}\big)\nonumber\\
&+ b_{18}\big((c+1)e^{18}-(2c+3)e^{27}+(5c+6)e^{36}+(4c+5)e^{45}\big)
\end{align}
with $b_{17}^2+b_{18}^2\neq 0$ and $(2c^2-3)b_{18}=0$. 

\begin{lemma}\label{lem:ap}
The endomorphism $E\colon\frh\to\frh$, $E=\omega_{pK}^{-1}\circ\omega_{cs}$, is an almost product structure if and only if
\begin{itemize}
\item $b_{18}=0$;
\item $c^2 a_{17}^2+a_{18}^2=c^2 b_{17}^2$;
\item $b_{13}=-\frac{1}{2\,c^2\,b_{17}}\left( (2+c)\,b_{16}^2+2\,c\,a_{14}\,a_{18}-2\,c^2\,a_{13}\,a_{17} \right)$.
\end{itemize}
\end{lemma}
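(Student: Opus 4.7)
The lemma reduces to a direct linear-algebra calculation: given the explicit coordinate expressions (\ref{ex:3.1.1}) and (\ref{ex:3.2.1}), we must determine precisely when the endomorphism $E=\omega_{pK}^{-1}\circ\omega_{cs}$ of $\frh$ satisfies $E^2=\mathrm{Id}$. Since Lemma \ref{lemma:E_integrable} shows that, once $E$ is an almost product structure, integrability is automatic, the only thing to check is the equation $E^2=\mathrm{Id}$ itself.

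My plan is to first represent $\omega_{pK}$ and $\omega_{cs}$ by their $8\times 8$ skew-symmetric matrices $\Omega_{pK}$ and $\Omega_{cs}$ in the basis $\{e_k\}_{k=1}^8$, and then to compute $\Omega_{pK}^{-1}\,\Omega_{cs}$. Inspection of (\ref{ex:3.2.1}) shows that $\Omega_{pK}$ has a convenient block-triangular pattern: the parameters $b_{12},b_{13},b_{14}$ couple only among $\{e_1,\dots,e_4\}$, while $b_{16},b_{17},b_{18}$ couple the last four vectors back to the first four. The non-degeneracy hypotheses $b_{17}^2+b_{18}^2\neq 0$ and $(2c^2-3)b_{18}=0$ make $\Omega_{pK}$ invertible, and the block shape means the inverse (and hence $E$) can be written explicitly.

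To avoid writing all $64$ entries of $E^2$, I would invoke Lemma \ref{lem:skewEJ}: since $E$ anti-commutes with $J_c$, the operator $E^2$ commutes with $J_c$, so it suffices to check $E^2=\mathrm{Id}$ on the four representatives $\{e_1,e_3,e_5,e_7\}$ of the $J_c$-invariant real planes. I expect the column of $E^2$ indexed by $e_7$ to produce two relations: the coefficient of $e_8$ must vanish, which after simplification forces $b_{18}=0$, and the coefficient of $e_7$ must equal $1$, which yields the quadratic identity $c^2a_{17}^2+a_{18}^2=c^2b_{17}^2$. Feeding these back into the columns indexed by $e_1$ or $e_3$ should then produce a single linear equation in $b_{13}$, whose unique solution is exactly the expression stated in the third bullet. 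Conversely, one verifies directly that under these three conditions every entry of $E^2-\mathrm{Id}$ vanishes.

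The main obstacle is bookkeeping rather than concept: the argument is a stratification of the polynomial system $E^2-\mathrm{Id}=0$, with $c$ appearing as a polynomial parameter throughout. In practice I would carry out the $8\times 8$ inversion, product and squaring with a computer algebra system to confirm that no additional constraints appear and that the three listed conditions are jointly sufficient as well as necessary.
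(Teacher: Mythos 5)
Your proposal is correct and follows essentially the same route as the paper: a direct computation of $E^2$ from the matrix of $\omega_{pK}^{-1}\circ\omega_{cs}$, reading off the two diagonal constraints (which force $b_{18}=0$ and $c^2a_{17}^2+a_{18}^2=c^2b_{17}^2$) and then a single remaining linear condition determining $b_{13}$. Your observation that $E^2$ commutes with $J_c$, so only the columns of $e_1,e_3,e_5,e_7$ need checking, is a mild bookkeeping refinement of what the paper does by exhibiting the block form $E^2=\begin{pmatrix}\mathrm{Id}&0\\ D&\mathrm{Id}\end{pmatrix}$ with $D$ diagonal and proportional to one scalar $\alpha$.
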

\begin{proof}
One checks that the entries $(1,1)$ and $(3,3)$ of $E^2$ are, respectively, 
$$\frac{c^2 a_{17}^2+a_{18}^2}{c^2 b_{17}^2+b_{18}^2} \text{\quad and \quad}
\frac{c^2a_{17}^2+a_{18}^2)}{c^2b_{17}^2+(5+4c)^2 b_{18}^2}.$$
Since they must be equal to 1, one concludes that $b_{18}=0$ (since $c>0$)
and $c^2 a_{17}^2+a_{18}^2=c^2 b_{17}^2$, with $b_{17}\neq 0$. Consequently, $E^2$ becomes
\[
E^2=\begin{pmatrix}
\mathrm{Id} & 0\\ D & \mathrm{Id}
\end{pmatrix}
\]
where $D=\text{diag}\big((1+c)\,\alpha,\, \alpha,\, -\frac{c}{3+2c}\,\alpha,\, \alpha\big)$
with 
\begin{equation*}
\begin{split}
\alpha&=\frac{1}{c^5\,b_{17}^4}\big( 
	(c^2 a_{17}^2+a_{18}^2)((2+c)b_{16}^2+2c^2b_{13}b_{17}) +
	2c^3b_{17}^2\,(a_{14}a_{18}-c\,a_{13}a_{17})
	\big) \\
&= \frac{1}{c^3\,(c^2a_{17}^2+a_{18}^2)}\big(
	(2+c)\,b_{16}^2 + 2c^2b_{13}b_{17} +2ca_{14}a_{18}-2c^2a_{13}a_{17}
	\big)\,.
\end{split}
\end{equation*}
Now $D=0$ if and only if $\alpha=0$. Solving for $b_{13}$ from $\alpha=0$ one gets
\[
b_{13}=-\frac{1}{2\,c^2\,b_{17}}\left((2+c)\,b_{16}^2+2\,c\,a_{14}\,a_{18}-2\,c^2\,a_{13}\,a_{17}\right).
\]
\end{proof}

We first consider the complex symplectic form
\[
\widehat{\omega}_{cs}=(c+1)(2c+3)e^{17}-c^2e^{28}+ce^{35}+c^2e^{46}
\]
and the pseudo-Kähler form
\[
\widehat{\omega}_{pK}=(c+1)(2c+3)e^{17}+c^2e^{28}+ce^{35}-c^2e^{46}\,,
\]
obtained by setting all the parameters to zero in \eqref{ex:3.1.1} and \eqref{ex:3.2.1}, except for $a_{17}=b_{17}=1$. If $\widehat{E}=\widehat{\omega}_{pK}^{-1}\circ\widehat{\omega}_{cs}$ and $\widehat{g}=-\widehat{\omega}_{pK}\circ J$, then $\widehat{E}^2=\mathrm{Id}$ by Lemma \ref{lem:ap}. Hence, by Theorem \ref{theo:main}, $(J_c,\widehat{E},\widehat{g})$ is a family of hypersymplectic structures on $\frh$. The Levi-Civita connection~$\widehat{\nabla}$ of~$\widehat{g}$ is given by
\begin{align}\label{LC1}
\widehat{\nabla}_{e_1}e_1&=\frac{c+1}{c}e_3\,, & \widehat{\nabla}_{e_1}e_2&=-e_4\,, & \widehat{\nabla}_{e_1}e_3&=ce_5\,, & \widehat{\nabla}_{e_1}e_4&=-e_6\,,\nonumber\\ 
\widehat{\nabla}_{e_1}e_5&=-\frac{1}{2c+3}e_7\,, & \widehat{\nabla}_{e_1}e_6&=-e_8\,, & \widehat{\nabla}_{e_3}e_1&=(c+1)e_5\,, & \widehat{\nabla}_{e_3}e_2&=e_6\,,\nonumber\\
\widehat{\nabla}_{e_3}e_3&=\frac{c}{2c+3}e_7\,, & \widehat{\nabla}_{e_3}e_4&=-e_8\,, & \widehat{\nabla}_{e_5}e_1&=\frac{2c+2}{2c+3}e_7 & \widehat{\nabla}_{e_5}e_2&=2e_8\,.
\end{align}
A computation shows that the curvature vanishes, hence $(J_c,\widehat{E},\widehat{g})$ is a family of flat hypersymplectic structures on~$\frh$. 

\begin{lemma}
$\widehat{\nabla}$ is complete.
\end{lemma}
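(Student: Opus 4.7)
The plan is to show directly that every geodesic of $\widehat{\nabla}$ extends for all $t\in\RR$. Since $\widehat{\nabla}$ is left-invariant on the simply connected nilpotent Lie group $H$ with $\mathrm{Lie}(H)=\frh$, it suffices to handle geodesics issuing from the identity. Writing such a geodesic $\gamma(t)$ as $\dot\gamma(t)=\sum_{i=1}^{8}a_i(t)\,e_i|_{\gamma(t)}$ in the left-invariant frame, the equation $\widehat{\nabla}_{\dot\gamma}\dot\gamma=0$ translates into the ODE system
$$\dot a_k(t)=-\sum_{i,j=1}^{8}\Gamma_{ij}^{k}\,a_i(t)\,a_j(t), \qquad k=1,\dots,8,$$
where the Christoffel symbols are defined by $\widehat{\nabla}_{e_i}e_j=\sum_k\Gamma_{ij}^{k}\,e_k$.

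The crux of the argument is the following observation about \eqref{LC1}: every nonzero Christoffel symbol $\Gamma_{ij}^{k}$ satisfies $k>\max(i,j)$. This is a direct inspection of the twelve entries in \eqref{LC1}. Once noted, the system is strictly triangular: $\dot a_k$ is a constant-coefficient quadratic polynomial in $a_1,\dots,a_{k-1}$ only. Successive integration then yields $a_1,a_2$ constant, $a_3,a_4$ linear, $a_5,a_6$ quadratic, and $a_7,a_8$ cubic in $t$. In particular, the curve $v(t)=\sum_i a_i(t)\,e_i\in\frh$ is a polynomial in $t$, defined on all of $\RR$.

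To reconstruct $\gamma$ from $v$, I would appeal to the standard fact that a simply connected nilpotent Lie group embeds, via a faithful representation, as a closed subgroup of unipotent upper-triangular matrices in some $\mathrm{GL}_N(\RR)$. In this realization, the identity $\dot\gamma=(L_{\gamma})_{*}v(t)$ becomes the linear matrix ODE $\dot\gamma=\gamma\cdot v(t)$ with polynomial-in-$t$ coefficients, whose unique solution with any prescribed initial value exists for all $t\in\RR$. Hence every geodesic of $\widehat{\nabla}$ through the identity is complete, and by left-invariance so is every geodesic, proving the claim.

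The only nontrivial step is the triangularity observation for \eqref{LC1}; after that, the successive integration is immediate and the matrix-ODE reconstruction is routine for simply connected nilpotent Lie groups. I do not expect any genuine analytic obstacle.
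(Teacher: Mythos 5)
Your proof is correct, but it takes a different route from the paper's. For this particular lemma the paper exploits flatness: since $\widehat{\nabla}$ is flat, by a theorem of Segal \cite{Segal} completeness is equivalent to the nilpotency of every operator $\rho(x)=\widehat{\nabla}_{\bullet}x$, which is read off immediately from \eqref{LC1} — a one-line argument once the curvature has been computed. Your argument instead solves the reduced geodesic equation $\dot a_k=-\sum_{i,j}\Gamma^k_{ij}a_ia_j$ directly, using the correct observation that every nonzero Christoffel symbol in \eqref{LC1} satisfies $k>\max(i,j)$, so that the system integrates successively to polynomial components; you then reconstruct the geodesic on the group via a unipotent matrix realization and a linear ODE with polynomial coefficients. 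This is in fact the same strategy the paper uses later for the \emph{non-flat} connection $\nabla$ (there the reduction to the Lie-algebra ODE is quoted from Guediri \cite{Guediri} rather than re-derived through a matrix embedding), so your approach has the merit of being uniform — it proves both completeness lemmas at once and does not use flatness — at the cost of being longer than the paper's appeal to Segal's criterion. The only step worth spelling out a little more is the reduction to geodesics through the identity and the global solvability of the development equation $\dot\gamma=(L_\gamma)_*v(t)$, but both are standard for left-invariant connections on Lie groups, and your unipotent-embedding argument handles the latter correctly. No gaps.
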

\begin{proof}
Since $\widehat{\nabla}$ is flat, its completeness is equivalent to $\rho\colon\frh\to\mathfrak{gl}(\frh)$, defined by $\rho(x)(y)=\widehat{\nabla}_yx$, being a nilpotent endomorphism for every $x,y\in\frh$ (see \cite[Theorem 1]{Segal}). This is the case, as one sees by looking at \eqref{LC1}. 
\end{proof}
We consider again the complex symplectic form
\[
\omega_{cs}=(c+1)(2c+3)e^{17}-c^2e^{28}+ce^{35}+c^2e^{46}\,,
\]
this time with the pseudo-Kähler form obtained by setting $b_{12}=b_{14}=b_{18}=0$, $b_{16}=b_{17}=1$ and $b_{13}=-\frac{c+2}{2c^2}$ in \eqref{ex:3.2.1}, namely,
\begin{align*}
\omega_{pK}&=-\frac{(c+1)(c+2)}{2c^2}e^{13}+(c+1)e^{16}+(c+1)(2c+3)e^{17}\\
&+\frac{c+2}{2c}e^{24}-e^{25}+c^2e^{28}
-(c+2)e^{34}+ce^{35}-c^2e^{46}\,.
\end{align*}
We set $E=\omega_{pK}^{-1}\circ \omega_{cs}$ and $g=-\omega_{pK}\circ J$; again, by Lemma \ref{lem:ap} one has $E^2=\mathrm{Id}$ and by Theorem~\ref{theo:main}, $(J_c,E,g)$ is a family of hypersymplectic structures on $\frh$. The Levi-Civita connection $\nabla$ of $g$ is given by
\begin{align*}
\nabla_{e_1}e_1&=\frac{c+1}{c}e_3+\frac{(c+1)^2}{c^3}e_6-\frac{(c+1)^2}{c^3(2c+3)}e_7\,, & 
\nabla_{e_2}e_4&=\frac{1}{c}e_8\,,\\
\nabla_{e_1}e_2&=-e_4-\frac{c+1}{c}e_5-\frac{c+1}{c^3}e_8\,, & 
\nabla_{e_3}e_1&=(c+1)e_5+\frac{c+1}{c^2}e_8\,,\\ 
\nabla_{e_1}e_3&=ce_5+\frac{c+1}{c^2}e_8\,, & 
\nabla_{e_3}e_2&=e_6-\frac{1}{2c+3}e_7\,,\\
\nabla_{e_1}e_4&=-e_6+\frac{c+1}{c(2c+3)}e_7\,, &
\nabla_{e_3}e_3&=\frac{c}{2c+3}e_7\,,\\
\nabla_{e_1}e_5&=-\frac{1}{2c+3}e_7\,, & 
\nabla_{e_3}e_4&=-e_8\,,
\end{align*}
\begin{align*}
\nabla_{e_1}e_6&=-e_8\,, & 
\nabla_{e_4}e_1&=\frac{c+1}{c(2c+3)}e_7\,,\\
\nabla_{e_2}e_1&=-\frac{c+1}{c}e_5-\frac{c+1}{c^3}e_8\,, &
\nabla_{e_4}e_2&=\frac{1}{c}e_8\,,\\
\nabla_{e_2}e_2&=-\frac{1}{c}e_6+\frac{1}{c(2c+3)}e_7\,, &
\nabla_{e_5}e_1&=\frac{2c+2}{2c+3}e_7\,,\\
\nabla_{e_2}e_3&=-\frac{1}{2c+3}e_7\,,&
\nabla_{e_5}e_2&=2e_8\,.
\end{align*}
Then, we see that the curvature of $g$ is non-zero; indeed,  $R(e_1,e_2)(e_1)=\frac{3(c+1)}{c(2c+3)}e_7$ and $R(e_1,e_2)(e_2)=\frac{3}{c}e_8$. Hence $(J_c,E,g)$ is a family of non-flat hypersymplectic structures on~$\frh$.
\begin{lemma}
$\nabla$ is complete.
\end{lemma}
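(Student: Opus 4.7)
My plan is to reduce completeness of $\nabla$ to solvability of the geodesic ODE on the Lie algebra $\frh$, and then exploit a triangular shape of that ODE that can be read off directly from the displayed covariant derivatives. Since $G$ is a connected, simply connected nilpotent Lie group and $\nabla$ is left-invariant, it suffices to show that every $\nabla$-geodesic $\gamma$ on $G$ with $\gamma(0)=e$ is defined on all of $\RR$; the statement then descends to any compact quotient $\Gamma\backslash G$. Encoding such a geodesic by its left-trivialized velocity $V(t)\coloneqq L_{\gamma(t)^{-1}\ast}\dot\gamma(t)\in\frh$, the equation $\nabla_{\dot\gamma}\dot\gamma=0$ becomes the quadratic ODE
\[
\dot V=-\mu(V,V),\qquad \mu(X,Y)\coloneqq\nabla_X Y,
\]
on $\frh$, and $\gamma$ is recovered from $V$ as the (left) product integral of $V$, which is well-defined on $G$ for all $t\in\RR$ as soon as $V$ is.

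Unlike the flat case treated above, Segal's criterion does not apply to $\nabla$, so I would analyse the ODE by hand. Writing $V(t)=\sum_{k=1}^{8}v_k(t)\,e_k$, the key observation, to be verified directly against the explicit formulas for $\nabla_{e_i}e_j$, is that every non-zero $\nabla_{e_i}e_j$ has all of its components along basis vectors $e_k$ with $k>\max(i,j)$. This makes the system strictly triangular: each $\dot v_k$ is a quadratic polynomial in $v_1,\dots,v_{k-1}$ alone. One reads off in particular $\dot v_1=\dot v_2=0$, $\dot v_3=-\tfrac{c+1}{c}v_1^2$, $\dot v_4=v_1 v_2$, and so on up to $\dot v_8$.

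The ODE can therefore be integrated by induction on $k$: assuming $v_1(t),\dots,v_{k-1}(t)$ are polynomials in $t$, integrating the quadratic expression on the right-hand side shows that $v_k(t)$ is again a polynomial in $t$. Hence $V(t)$ is defined, and polynomially bounded, on all of $\RR$; the corresponding geodesic $\gamma(t)$ is therefore defined for all $t\in\RR$, which proves that $\nabla$ is complete.

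The only delicate point is the triangularity claim, which is a finite but somewhat tedious check against the table of covariant derivatives displayed before the lemma. Once this is settled the rest of the argument is formal, and in fact the same strategy recovers the completeness of the flat connection $\widehat\nabla$ without invoking Segal's theorem.
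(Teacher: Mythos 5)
Your proposal is correct and follows essentially the same route as the paper: the paper invokes Guediri's criterion to reduce completeness to global solvability of the quadratic ODE $x'=-\nabla_x x$ on $\frh$ and then asserts that the components $x_i(t)$ are polynomials, which is exactly what your triangularity observation (every nonzero $\nabla_{e_i}e_j$ lies in the span of the $e_k$ with $k>\max(i,j)$) makes explicit. Your inductive integration argument is a cleaner justification of the paper's ``tedious but straightforward computation,'' but it is the same proof.
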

\begin{proof}According to \cite{Guediri}, $\nabla$ is complete if and only if each curve $x\colon I\to\frh$, solution of $x'=-\nabla_xx$, is defined for every $t\in\RR$. If $x(t)=\sum_{i=1}^8x_i(t)e_i$, we obtain the following system of ODE's:
\[
\left\{\begin{array}{ccl}
x_1' & = & 0\\
x_2' & = & 0\\
x_3' & = & -\frac{c+1}{c}x_1^2\\
x_4' & = & x_1x_2\\
x_5' & = & \frac{2(c+1)}{c}x_1x_2-(2c+1)x_1x_3\\
x_6' & = & -\frac{(c+1)^2}{c^3}x_1^2+\frac{1}{c}x_2^2+x_1x_4-x_2x_3\\[2pt]
x_7' & = & \frac{(c+1)^2x_1^2-c^2x_2^2-c^4x_3^2}{c^3(2c+3)}-\frac{2(c+1)}{c(2c+3)}x_1x_4-\frac{2c+1}{2c+3}x_1x_5+\frac{2}{2c+3}x_2x_3\\
x_8' & = & \frac{2(c+1)}{c^3}x_1x_2-\frac{2(c+1)}{c^2}x_1x_3-\frac{2}{c}x_2x_4+x_1x_6+x_3x_4-2x_2x_5\\
\end{array}
\right.
\]
A tedious but straightforward computation shows that $x_i(t) $ is a polynomial in $t$, for each $i=1,\ldots,8$, hence $x(t)$ is defined for all $t\in\RR$, and $\nabla$ is complete.
\end{proof}

Since the Lie algebra $\frh$ has a rational structure, the only connected, simply connected nilpotent Lie group $H$ with Lie algebra $\frh$ admits a lattice $\Delta$ by Malcev theorem \cite{Maltsev1949}, and the nilmanifold $\Delta\backslash H$ admits two 1-parameter families of hypersymplectic structures, one flat and the other non-flat, albeit, clearly, Ricci-flat.

Putting all these results together, we obtain the following theorem:

\begin{theorem}\label{theo:example}
For every $n\geq 2$ there exists a $4n$-dimensional nilmanifold $\Gamma\backslash G$, with $\frg=\mathrm{Lie}(G)$ 4-step nilpotent, which admits two families of non-equivalent hypersymplectic structures $(J_c,\widehat{E},\widehat{g})$ and $(J_c,E,g)$, indexed by a parameter $c>0$, such that $\widehat{g}$ is flat and complete and $g$ is non-flat and complete.
\end{theorem}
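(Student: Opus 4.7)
The plan is to orchestrate the ingredients already assembled in the preceding paragraphs and then extend the $8$-dimensional construction to arbitrary dimension $4n$, $n\ge 2$.

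For $n=2$, I would begin by noting that $\frh$ is $4$-step nilpotent with rational structure constants, so by Malcev's theorem \cite{Maltsev1949} the simply connected Lie group $H$ with $\mathrm{Lie}(H)=\frh$ admits a lattice $\Delta$, and $\Delta\backslash H$ is a compact nilmanifold. For every $c\in(0,\infty)$, Lemma~\ref{lem:ap} shows that both $\widehat E=\widehat\omega_{pK}^{-1}\circ\widehat\omega_{cs}$ and $E=\omega_{pK}^{-1}\circ\omega_{cs}$ satisfy $E^2=\Id$, so Theorem~\ref{theo:main} promotes each choice to a hypersymplectic structure on $\frh$. Flatness of $\widehat g$, non-flatness of $g$ (already witnessed by $R(e_1,e_2)(e_1)\ne 0$), and completeness of both Levi-Civita connections are exactly the content of the two preceding lemmas. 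Non-equivalence of the two families at fixed $c$ is automatic, since a flat and a non-flat pseudo-Riemannian metric cannot be isometric.

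For $n\ge 3$, I would set $\frg=\frh\oplus\RR^{4(n-2)}$, which remains $4$-step nilpotent because the second summand is abelian. Endowing $\RR^{4(n-2)}$ with the standard flat hypersymplectic structure and taking orthogonal direct sums with each of the two families yields hypersymplectic structures on $\frg$; since the Levi-Civita connection of an orthogonal direct sum splits as a direct sum, flatness, non-flatness, and completeness are all inherited from the $\frh$-factor, and the flat versus non-flat dichotomy continues to rule out equivalences between the two families. A lattice in $G=H\times\RR^{4(n-2)}$ is then obtained as $\Gamma=\Delta\times\ZZ^{4(n-2)}$.

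The main obstacle is essentially organisational: the nontrivial curvature and completeness computations for $\widehat g$ and $g$ are already in place, and the abstract packaging is supplied by Lemma~\ref{lem:ap} together with Theorem~\ref{theo:main}. The only small points demanding attention are the preservation of $4$-step nilpotency under the enlargement $\frh\leadsto\frh\oplus\RR^{4(n-2)}$, and the inheritance of the flat/non-flat and completeness properties under this enlargement; both are immediate, so the bulk of the argument amounts to citing the pieces above in the right order.
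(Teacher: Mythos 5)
Your proposal is correct and follows essentially the same route as the paper: the $n=2$ case is assembled from Lemma~\ref{lem:ap}, Theorem~\ref{theo:main}, the curvature computations and the two completeness lemmas, with Malcev's theorem providing the lattice, and the general case is obtained by taking the product with the flat hypersymplectic $\RR^{4(n-2)}$. Your explicit remarks on non-equivalence (flat vs.\ non-flat) and on the lattice $\Delta\times\ZZ^{4(n-2)}$ are welcome details that the paper leaves implicit.
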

\begin{proof}
The above discussion deals with the case $n=2$. To obtain the examples in dimension $4n$, $n\geq 3$, it suffices to choose $\frg=\frh\oplus\RR^{4(n-2)}$; for a flat example we consider the product of the hypersymplectic structure $(J_c,\widehat{E},\widehat{g})$ with the flat hypersymplectic structure on $\RR^{4(n-2)}$. To obtain a non-flat example, one considers the product of the hypersymplectic structure $(J_c,E,g)$ with the flat hypersymplectic structure on $\RR^{4(n-2)}$
\end{proof}

\begin{remark}
The examples of hypersymplectic metrics constructed in \cite{AndradaDotti} are at most 3-step nilpotent. It was proved in \cite[Corollary 5.15]{BFLM2018} that the nilpotency step of an 8-dimensional complex symplectic nilpotent Lie algebra is at most 4. Hence Theorem \ref{theo:example} is optimal in dimension 8, meaning that one cannot reach a higher nilpotency step. 
\end{remark}

\bibliographystyle{plain}
\bibliography{bibliography}

\begin{thebibliography}{10}

\bibitem{Andrada2006}
A.~Andrada.
\newblock Hypersymplectic {L}ie algebras.
\newblock {\em J. Geom. Phys.}, 56(10):2039--2067, 2006.

\bibitem{ABD}
A.~Andrada, M.~L. Barberis, and I.~Dotti.
\newblock Classification of abelian complex structures on 6-dimensional {L}ie
  algebras.
\newblock {\em J. Lond. Math. Soc. (2)}, 83(1):232--255, 2011.
\newblock [Corrigendum: J. Lond. Math. Soc. (2) \textbf{87} (2013), no.~1,
  319--320].

\bibitem{AndradaDotti}
A.~Andrada and I.~G. Dotti.
\newblock Double products and hypersymplectic structures on
  {$\mathbb{R}^{4n}$}.
\newblock {\em Comm. Math. Phys.}, 262(1):1--16, 2006.

\bibitem{Bande-Kotschick}
G.~Bande and D.~Kotschick.
\newblock The {G}eometry of {R}ecursion {O}perators.
\newblock {\em Comm. Math. Phys.}, 280(3):737--749, 2008.

\bibitem{BFLM2018}
G.~Bazzoni, M.~Freibert, A.~Latorre, and B.~Meinke.
\newblock Complex symplectic structures on {L}ie algebras.
\newblock {\em J. Pure Appl. Algebra}, 225(6):106585, 28, 2021.

\bibitem{COUV}
M.~Ceballos, A.~Otal, L.~Ugarte, and R.~Villacampa.
\newblock Invariant complex structures on 6-nilmanifolds: classification,
  {F}r{\"o}licher spectral sequence and special {H}ermitian metrics.
\newblock {\em J. Geom. Anal.}, 26(1):252--286, 2016.

\bibitem{Chu1974}
B.-Y. Chu.
\newblock {S}ymplectic homogeneous spaces.
\newblock {\em Trans. Amer. Math. Soc.}, 197:145--159, 1974.

\bibitem{CPO}
R.~Cleyton, Y.-S. Poon, and G.~P. Ovando.
\newblock Weak mirror symmetry of complex symplectic {L}ie algebras.
\newblock {\em J. Geom. Phys.}, 61(8):1553--1563, 2011.

\bibitem{ContiRossi}
D.~Conti and F.~A. Rossi.
\newblock Construction of nice nilpotent {L}ie groups.
\newblock {\em J. Algebra}, 525:311--340, 2019.

\bibitem{CFU04}
L.~A. Cordero, M.~Fern\'{a}ndez, and L.~Ugarte.
\newblock Pseudo-{K}\"{a}hler metrics on six-dimensional nilpotent {L}ie
  algebras.
\newblock {\em J. Geom. Phys.}, 50(1-4):115--137, 2004.

\bibitem{DancerSwann}
A.~Dancer and A.~Swann.
\newblock Hypersymplectic manifolds.
\newblock In {\em Recent developments in pseudo-{R}iemannian geometry}, ESI
  Lect. Math. Phys., pages 97--111. Eur. Math. Soc., Z\"{u}rich, 2008.

\bibitem{Donaldson}
S.~K. Donaldson.
\newblock Two-forms on four-manifolds and elliptic equations.
\newblock In {\em Inspired by {S}. {S}. {C}hern}, volume~11 of {\em Nankai
  Tracts Math.}, pages 153--172. World Sci. Publ., Hackensack, NJ, 2006.

\bibitem{FPPS}
A.~Fino, H.~Pedersen, Y.-S. Poon, and Marianne~Weye S{\o}rensen.
\newblock Neutral {C}alabi-{Y}au structures on {K}odaira manifolds.
\newblock {\em Comm. Math. Phys.}, 248(2):255--268, 2004.

\bibitem{GR}
M.~Goze and E.~Remm.
\newblock Non existence of complex structures on filiform {L}ie algebras.
\newblock {\em Comm. Algebra}, 30(8):3777--3788, 2002.

\bibitem{Guan11}
D.~Guan.
\newblock On classification of compact complex solvmanifolds.
\newblock {\em J. Algebra}, 347:69--82, 2011.

\bibitem{Guediri}
M.~Guediri.
\newblock Sur la compl\'{e}tude des pseudo-m\'{e}triques invariantes a gauche
  sur les groupes de {L}ie nilpotents.
\newblock {\em Rend. Sem. Mat. Univ. Politec. Torino}, 52(4):371--376, 1994.

\bibitem{Gutkin85}
D.~Gutkin.
\newblock Vari\'{e}t\'{e}s bi-structur\'{e}es et op\'{e}rateurs de
  r\'{e}cursion.
\newblock {\em Ann. Inst. H. Poincar\'{e} Phys. Th\'{e}or.}, 43(3):349--357,
  1985.

\bibitem{Hasegawa}
K.~Hasegawa.
\newblock Minimal models of nilmanifolds.
\newblock {\em Proc. Amer. Math. Soc.}, 106(1):65--71, 1989.

\bibitem{Hasegawa2}
K.~Hasegawa.
\newblock A note on compact solvmanifolds with {K}\"{a}hler structures.
\newblock {\em Osaka J. Math.}, 43(1):131--135, 2006.

\bibitem{Hitchin2}
N.~Hitchin.
\newblock Hypersymplectic quotients.
\newblock {\em Atti Accad. Sci. Torino Cl. Sci. Fis. Mat. Natur.},
  (124):169--180, 1990.

\bibitem{Hitchin1}
N.~Hitchin.
\newblock Hyper{K}\"{a}hler manifolds.
\newblock Number 206, pages Exp. No. 748, 3, 137--166. 1992.
\newblock S\'{e}minaire Bourbaki, Vol. 1991/92.

\bibitem{Hitchin3}
N.~Hitchin, A.~Karlhede, U.~Lindstr\"{o}m, and M.~Ro\v{c}ek.
\newblock Hyper-{K}\"{a}hler metrics and supersymmetry.
\newblock {\em Comm. Math. Phys.}, 108(4):535--589, 1987.

\bibitem{Latorre2016}
A.~Latorre.
\newblock {\em Geometry of nilmanifolds with invariant complex structure}.
\newblock PhD thesis, Universidad de Zaragoza, 2016.

\bibitem{LatorreUgarte}
A.~Latorre and L.~Ugarte.
\newblock On the stability of compact pseudo-{K}\"{a}hler and neutral
  {C}alabi-{Y}au manifolds.
\newblock {\em J. Math. Pures Appl. (9)}, 145:240--262, 2021.

\bibitem{Maltsev1949}
A.~I. Malcev.
\newblock {O}n a class of homogeneous spaces.
\newblock {\em Izv. Akad. Nauk. SSSR Ser. Mat.}, 13(1):9--32, 1949.

\bibitem{Mil}
D.V. Millionshchikov.
\newblock Complex structures on nilpotent lie algebras and descending central
  series.
\newblock {\em Rendiconti Seminario Matematico Univ. Pol. Torino},
  74(1):173--182, 2006.

\bibitem{Milnor1976}
J.~Milnor.
\newblock {C}urvatures of {L}eft {I}nvariant {M}etrics on {L}ie {G}roups.
\newblock {\em Adv. in Mathematics}, 21(3):293--329, 1976.

\bibitem{Mubarakzyanov1963}
G.~M. Mubarakzyanov.
\newblock {O}n solvable {L}ie algebras.
\newblock {\em Izv. Vyssh. Uchebn. Zaved. Mat.}, 32(1):114--123, 1963.

\bibitem{NiBai}
X.~Ni and C.~Bai.
\newblock Special symplectic {L}ie groups and hypersymplectic {L}ie groups.
\newblock {\em Manuscripta Math.}, 133(3-4):373--408, 2010.

\bibitem{OoguriVafa}
H.~Ooguri and C.~Vafa.
\newblock Geometry of {$N=2$} strings.
\newblock {\em Nuclear Phys. B}, 361(2):469--518, 1991.

\bibitem{Ovando2000}
G.~Ovando.
\newblock Invariant complex structures on solvable real {L}ie groups.
\newblock {\em Manuscripta Math.}, 103(1):19--30, 2000.

\bibitem{Ovando2004}
G.~Ovando.
\newblock Complex, symplectic and {K}ähler structures on four dimensional
  {L}ie groups.
\newblock {\em Rev. Un. Mat. Arg.}, 45(2):55--67, 2004.

\bibitem{Ovando2006Sym}
G.~Ovando.
\newblock {F}our {D}imensional {S}ymplectic {L}ie {A}lgebras.
\newblock {\em Beitr. Algebra Geom.}, 47(2):419--434, 2006.

\bibitem{Ovando2006PK}
G.~Ovando.
\newblock {I}nvariant {P}seudo-{K}ähler {M}etrics in {D}imension {F}our.
\newblock {\em J. Lie Theory}, 16(2):371--391, 2006.

\bibitem{Salamon2001}
S.~M. Salamon.
\newblock Complex structures on nilpotent {L}ie algebras.
\newblock {\em J. Pure Appl. Algebra}, 157(2-3):311--333, 2001.

\bibitem{Samelson1953}
H.~Samelson.
\newblock {A} class of complex-analytic manifolds.
\newblock {\em Portugaliae Math.}, 12(4):129--132, 1953.

\bibitem{Segal}
D.~Segal.
\newblock The structure of complete left-symmetric algebras.
\newblock {\em Math. Ann.}, 293(3):569--578, 1992.

\bibitem{Snow1990}
J.~E. Snow.
\newblock Invariant complex structures on four-dimensional solvable real {L}ie
  groups.
\newblock {\em Manuscripta Math.}, 66(1):397--412, 1990.

\bibitem{Yamada2017}
T.~Yamada.
\newblock Complex structures and non-degenerate closed 2-forms of compact real
  parallelizable nilmanifolds.
\newblock {\em Osaka J. Math.}, 54(1):121--128, 2017.

\bibitem{Zakharov84}
V.~Zakharov and B.~Konopel'chenko.
\newblock On the theory of recursion operator.
\newblock {\em Comm. Math. Phys.}, 94(4):483--509, 1984.

\end{thebibliography}

\end{document}